\tikzstyle{vertex}=[circle,draw=black,fill=black,inner sep=0,minimum size=3pt,text=white,font=\footnotesize]
\newtheorem{theorem}{Theorem}
\newtheorem*{conjecture*}{Conjecture}
\newtheorem{proposition}{Proposition}[section]
\newtheorem{lemma}[proposition]{Lemma}
\newtheorem{claim}[proposition]{Claim}
\theoremstyle{remark}
\newtheorem*{remark*}{Remark}
\newtheorem{problem}[proposition]{Problem}
\newcommand{\vs}{\vspace{3mm}}
\newcommand{\R}{\mathbb{R}}
\newcommand{\Z}{\mathbb{Z}}
\newcommand{\N}{\mathbb{N}}
\newcommand{\C}{\mathbb{C}}
\newcommand{\mc}{\mathcal}
\newcommand{\ep}{\epsilon}
\newcommand{\sub}{\subseteq}
\newcommand{\modd}[1]{\text{ mod } #1}
\newcommand{\wt}{\widetilde}
\newcommand{\pos}{\text{pos}}
\DeclareMathOperator{\Real}{Re}
\title{A New Upper Bound for Separating Words}
\author{Zachary Chase}
\thanks{The author is partially supported by Ben Green's Simons Investigator Grant 376201 and gratefully acknowledges the support of the Simons Foundation.}
\address{Mathematical Institute, Andrew Wiles Building, Radcliffe Observatory Quarter, Woodstock Road, Oxford OX2 6GG, UK}
\email{zachary.chase@maths.ox.ac.uk}
\date{July 23, 2020}
\begin{document}

\begin{abstract}
We prove that for any distinct $x,y \in \{0,1\}^n$, there is a deterministic finite automaton with $\widetilde{O}(n^{1/3})$ states that accepts $x$ but not $y$. This improves Robson's 1989 upper bound of $\widetilde{O}(n^{2/5})$. 
\end{abstract}

\maketitle

\section{Introduction}

Given a positive integer $n$ and two distinct 0\hspace{.3mm}-\hspace{-.3mm}1 strings $x,y \in \{0,1\}^n$, let $f_n(x,y)$ denote the smallest positive integer $m$ such that there exists a deterministic finite automaton with $m$ states that accepts $x$ but not $y$ (of course, $f_n(x,y) = f_n(y,x)$). Define $f(n) := \max_{x \not = y \in \{0,1\}^n} f_n(x,y)$. The ``separating words problem" is to determine the asymptotic behavior of $f(n)$. An easy example \cite{sublinear} shows $f(n) = \Omega(\log n)$, which is the best lower bound known to date. Goralcik and Koubek \cite{sublinear} in 1986 proved an upper bound of $f(n) = o(n)$, and Robson \cite{robson} in 1989 proved an upper bound of $f(n) = O(n^{2/5}\log^{3/5}n)$. Despite much attempt, there has been no further improvement to the upper bound to date. 

\vs

In this paper, we improve the upper bound on the separating words problem to $f(n) = \wt{O}(n^{1/3})$.

\vspace{1mm}

\begin{theorem}\label{main}
For any distinct $x,y \in \{0,1\}^n$, there is a deterministic finite automaton with $O(n^{1/3}\log^7n)$ states that accepts $x$ but not $y$.
\end{theorem}

\vspace{1mm}

We made no effort to optimize the (power of the) logarithmic term $\log^7n$. 

\vspace{1.5mm}

\section{Definitions and Notation}

A \textit{deterministic finite automaton} (DFA) $M$ is a $4$-tuple $(Q,\delta,q_1,F)$ consisting of a finite set $Q$, a function $\delta : Q\times\{0,1\} \to Q$, an element $q_1 \in Q$, and a subset $F \sub Q$. We call elements $q \in Q$ ``states". We call $q_1$ the ``initial state" and the elements of $F$ the ``accept states". We say $M$ \textit{accepts} a string $x = x_1,\dots,x_n \in \{0,1\}^n$ if (and only if) the sequence defined by $r_1 = q_1, r_{i+1} = \delta(r_i,x_i)$ for $1 \le i \le n$, has $r_{n+1} \in F$. 

\vspace{1mm}

For a positive integer $n$, we write $[n]$ for $\{1,\dots,n\}$. We write $\sim$ as shorthand for $ = (1+o(1))$. In our inequalities, $C$ and $c$ refer to (large and small, respectively) absolute constants that sometimes change from line to line. For functions $f$ and $g$, we say $f = \wt{O}(g)$ if $|f| \le C|g|\log^C|g|$ for some constant $C$. We say a set $A \sub [n]$ is \textit{$d$-separated} if $a,a' \in A, a\not = a'$ implies $|a-a'| \ge d$. For a set $A \sub [n]$, a prime $p$, and a residue $i \in [p]_0 := \{0,\dots,p-1\}$, let $A_{i,p} = \{a \in A : a \equiv i \pmod{p}\}$. 

\vspace{1mm}

For a string $x = x_1,\dots,x_n \in \{0,1\}^n$ and a (sub)string $w = w_1,\dots,w_l \in \{0,1\}^l$, let $\pos_w(x) := \{j \in \{1,\dots,n-l+1\} : x_{j+k-1} = w_k \text{ for all } 1 \le k \le l\}$ denote the set of all (starting) positions at which $w$ occurs as a (contiguous) substring in $x$. 

\vspace{1mm}

\section{An easy $\widetilde{O}(n^{1/2})$ bound, and motivation of our argument}

In this section, we sketch an argument of an $\wt{O}(n^{1/2})$ upper bound for the separating words problem, and then how to generalize that argument to obtain $\wt{O}(n^{1/3})$. 

\vspace{1mm}

For any two distinct strings $x,y \in \{0,1\}^n$, the sets $\pos_1(x)$ and $\pos_1(y)$ are of course different. A natural way, therefore, to try to separate different strings $x,y$ is to find a small prime $p$ and a residue $i \in [p]_0$ so that $|\pos_1(x)_{i,p}| \not = |\pos_1(y)_{i,p}|$; if we can find such a $p$ and $i$, then since\footnote{We make use of the fact that $q \mid a-b$ for all primes $q$ in a set $\mc{Q}$ implies $\prod_{q \in \mc{Q}} q \mid a-b$, along with standard estimates on $\prod_{q \in \mc{Q}} q$ for $\mc{Q} = \{q \le k : q \text{ prime}\}$.} there will be a prime $q$ of size $q = O(\log n)$ with $|\pos_1(x)_{i,p}| \not \equiv |\pos_1(y)_{i,p}| \pmod{q}$, there will be a deterministic finite automaton with $2pq = O(p\log n)$ states that accepts one string but not the other (see Lemma \ref{machine}). We are thus led to the following (purely number-theoretic) problem. 

\begin{problem}\label{problem}
For given $n$, determine the minimum $k$ such that for any distinct $A,B \sub [n]$, there is some prime $p \le k$ and some $i \in [p]_0$ for which $|A_{i,p}| \not = |B_{i,p}|$.
\end{problem}

Problem \ref{problem} has been considered in \cite{robson2}, \cite{scott}, and \cite{modular}\footnote{In the last reference, they look for an \textit{integer} $m \le k$ and some $i \in [m]_0$ for which $|A_{i,m}| \not = |B_{i,m}|$, which is of course more economical. We decided to restrict to primes for aesthetic reasons.} (and possibly other places) and was essentially solved in each. We present a simple solution, also discovered in \cite{modular}.

\begin{claim}
For any distinct $A,B \sub [n]$, there is some prime $p = O(\sqrt{n\log n})$ and some $i \in [p]_0$ for which $|A_{i,p}| \not = |B_{i,p}|$. 
\end{claim}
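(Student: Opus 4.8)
The plan is to find, for distinct $A, B \sub [n]$, a prime $p$ of size $O(\sqrt{n\log n})$ and a residue $i$ with $|A_{i,p}| \neq |B_{i,p}|$, by a counting (pigeonhole) argument on primes. First I would reduce to the following dichotomy: either $A$ and $B$ have different sizes — in which case every prime and residue trivially distinguishes a suitable count, or more carefully, taking $p$ to be any prime and $i$ ranging over $[p]_0$ we have $\sum_i |A_{i,p}| = |A| \neq |B| = \sum_i |B_{i,p}|$, so some $i$ works — or $|A| = |B|$, which is the main case. So assume $|A| = |B|$ and pick an element $a^\ast$ lying in exactly one of $A, B$, say $a^\ast \in A \setminus B$.

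In the main case the idea is: suppose for contradiction that for \emph{every} prime $p \le k$ and every residue $i \in [p]_0$ we have $|A_{i,p}| = |B_{i,p}|$. I would like to extract from this a strong arithmetic constraint and then count. The cleanest route: consider the polynomial (or formal sum) identity obtained by looking at $\sum_{a \in A} z^a$ versus $\sum_{b \in B} z^b$. The hypothesis $|A_{i,p}| = |B_{i,p}|$ for all $i$ says precisely that $\sum_{a\in A} z^a \equiv \sum_{b \in B} z^b \pmod{z^p - 1}$, i.e. $z^p - 1$ divides $P(z) := \sum_{a \in A} z^a - \sum_{b\in B} z^b$ in $\Z[z]$ (after reducing exponents mod $p$; more precisely $P(z) \bmod (z^p-1)$ is the zero polynomial). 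Since $P \not\equiv 0$ (as $A \neq B$) and $\deg P \le n$, and since the polynomials $\Phi_d(z)$ for $d \mid p$ — in particular the cyclotomic factor structure — force that each prime $p$ with this property contributes a factor, I would argue that if this held for all primes $p \le k$ then $\prod_{p \le k} \frac{z^p-1}{z-1}$ (or an appropriate product of cyclotomic polynomials $\Phi_p$) would divide $P(z)/(\text{something})$, giving $\deg P \ge \sum_{p \le k}(p-1) \gg k^2/\log k$ by the prime number theorem, contradicting $\deg P \le n$ once $k \gg \sqrt{n\log n}$.

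Let me make the divisibility step precise, since that is where care is needed. We have $P(1) = |A| - |B| = 0$, so $(z-1) \mid P(z)$. For an odd prime $p \le k$, the hypothesis gives $(z^p - 1) \mid P(z)$ in the sense that $P$ reduces to $0$ mod $z^p - 1$; since $z^p - 1 = (z-1)\Phi_p(z)$ with $\Phi_p$ irreducible, and $\Phi_p(1) = p$, we get that $\Phi_p(z) \mid P(z)$ in $\Q[z]$ (hence in $\Z[z]$, by Gauss). The $\Phi_p$ for distinct primes $p$ are pairwise coprime, so $\prod_{2 \le p \le k,\, p \text{ prime}} \Phi_p(z)$ divides $P(z)$, whence
\[
n \ge \deg P \ge \sum_{2 \le p \le k} (p-1).
\]
By standard prime-counting estimates the right side is $\gtrsim k^2/\log k$, so $n \gtrsim k^2/\log k$, i.e. $k \lesssim \sqrt{n \log n}$; contrapositively, choosing $k = C\sqrt{n\log n}$ for a suitable constant forces the existence of the desired prime $p \le k$ and residue $i$. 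I would conclude by noting the size-mismatch case is subsumed: if $|A| \neq |B|$ then already $p = 2$ works since $|A_{0,2}| + |A_{1,2}| \neq |B_{0,2}| + |B_{1,2}|$.

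The main obstacle is getting the polynomial divisibility exactly right — in particular justifying that reduction mod $z^p - 1$ being zero upgrades to honest divisibility by $\Phi_p$ over $\Z$ and that these cyclotomic factors genuinely multiply up in the degree bound (as opposed to $P$ being divisible only by $z^p-1$ with multiplicity issues across different $p$, which coprimality of the $\Phi_p$ rules out). Everything else — the prime number theorem estimate $\sum_{p\le k}(p-1) \asymp k^2/\log k$ and the handling of the trivial $|A|\neq|B|$ case — is routine. An alternative, more elementary packaging avoids polynomials entirely: count pairs $(a,b) \in A \times B$ with $a \neq b$ and observe each is "killed" (i.e. $a \equiv b \pmod p$) by at most $\log_2 n$ primes; if no prime $p \le k$ separated the counts one derives that the number of available primes, $\sim k/\log k$, times a lower bound forces $k^2/\log k \lesssim n$. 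I would present whichever is cleaner, but expect the cyclotomic-polynomial argument to be the slickest.
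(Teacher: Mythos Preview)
Your argument is correct and is essentially the paper's own proof: both reduce to showing that if $|A_{i,p}|=|B_{i,p}|$ for all primes $p\le k$ and all $i$, then each $\Phi_p$ divides the polynomial $P(z)=\sum_{a\in A}z^a-\sum_{b\in B}z^b$, whence $\deg P\ge\sum_{p\le k}(p-1)\sim\tfrac12 k^2/\log k$, forcing $k=O(\sqrt{n\log n})$. Your case split on $|A|\ne|B|$ is unnecessary (the cyclotomic argument already covers it), and the vague ``alternative'' counting sketch at the end does not obviously work as stated, but the main line is exactly right.
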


\begin{proof}
(Sketch) Fix distinct $A,B \sub [n]$. Suppose $k$ is such that $|A_{i,p}| = |B_{i,p}|$ for all primes $p \le k$ and all $i \in [p]_0$. For a prime $p$, let $\Phi_p(x)$ denote the $p^{\text{th}}$ cyclotomic polynomial, of degree $p-1$. Then since $\sum_{j=1}^n 1_A(j)e^{2\pi i \frac{aj}{p}} = \sum_{j=1}^n 1_B(j)e^{2\pi i \frac{aj}{p}}$ for all $p \le k$ and all $a \in [p]_0$, the polynomials $\Phi_p(x)$, for $p \le k$, divide $\sum_{j=1}^n (1_A(j)-1_B(j))x^j =: f(x)$. Therefore, $\prod_{p \le k} \Phi_p(x)$ divides $f(x)$. Since $A \not = B$, $f$ is not identically $0$ and thus must have degree at least $\sum_{p \le k} (p-1) \sim \frac{1}{2}\frac{k^2}{\log k}$. Since the degree of $f$ is trivially at most $n$, we must have $(1+o(1))\frac{1}{2}\frac{k^2}{\log k} \le n$.
\end{proof}

\vspace{-1mm}

By a standard pigeonhole argument (see Section $7$), the bound $\widetilde{O}(\sqrt{n})$ is sharp. 

\vspace{2mm}

A natural idea to improve this $\widetilde{O}(\sqrt{n})$ bound for the separating words problem is to consider the sets $\pos_w(x)$ and $\pos_w(y)$ for longer $w$. The length of $w$ is actually not important in terms of its ``cost" to the number of states needed, just as long as it is at most $p$, where we will be considering $|\pos_w(x)_{i,p}|$ and $|\pos_w(y)_{i,p}|$ (see Lemma \ref{machine}). One immediate benefit of considering longer $w$ is that the sets $\pos_w(x)$ and $\pos_w(y)$ are \textit{smaller} than $\pos_1(x)$ and $\pos_1(y)$; indeed, for example, it can be shown without much difficulty that for any distinct $x,y \in \{0,1\}^n$, there is some $w$ of length $n^{1/3}$ such that $\pos_w(x)$ and $\pos_w(y)$ are distinct sets of size at most $n^{2/3}$. Thus, to get a bound of $\widetilde{O}(n^{1/3})$ on the separating words problem, it suffices to show the following.

\begin{problem}\label{sparse}
For any distinct $A,B \sub [n]$ of sizes $|A|,|B| \le n^{2/3}$, there is some prime $p = \widetilde{O}(n^{1/3})$ and some $i \in [p]_0$ so that $|A_{i,p}| \not = |B_{i,p}|$. 
\end{problem} 

As in the proof sketch above, this problem is equivalent to a statement about a product of cyclotomic polynomials dividing a sparse polynomial of small degree (see the last page of \cite{modular}). We were not able to solve Problem \ref{sparse}. However, we make the additional observation that we can take $w$ so that $\pos_w(x)$ and $\pos_w(y)$ are \textit{well-separated} sets. Indeed, if $w$ has length $2n^{1/3}$ and has no period of length at most $n^{1/3}$, then $\pos_w(x)$ and $\pos_w(y)$ are $n^{1/3}$-separated sets. As we'll use later, Lemmas $1$ and $2$ of \cite{robson} show that such $w$ are common enough to ensure there is a choice with $\pos_w(x) \not = \pos_w(y)$. Our main technical theorem is thus the following\footnote{See page 4 for a more specific formulation.}.

\vspace{1mm}

\begin{theorem}\label{diffprime}
Let $A,B$ be distinct subsets of $[n]$ that are each $n^{1/3}$-separated. Then there is some prime $p = \wt{O}(n^{1/3})$ and some $i \in [p]_0$ so that $|A_{i,p}| \not = |B_{i,p}|$. 
\end{theorem}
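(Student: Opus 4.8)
I want to exploit the $n^{1/3}$-separation hypothesis, which the cyclotomic-polynomial argument doesn't use at all. Suppose for contradiction that $|A_{i,p}| = |B_{i,p}|$ for every prime $p \le k$ and every residue $i$, where $k$ is a parameter we'll fix to be $\wt{O}(n^{1/3})$. As in the sketch, this means $\Phi_p \mid f$ for all primes $p \le k$, where $f(x) = \sum_{j}(1_A(j) - 1_B(j))x^j$; equivalently, all $p$-th roots of unity (for $p$ prime, $p\le k$) are roots of $f$. So far this only gives degree $\gtrsim k^2/\log k$, which is the $\sqrt n$ bound. The extra leverage should come from the \emph{sparsity/separation structure} of $f$: the nonzero coefficients of $f$ are supported on $A \cup B$, a set that is ``coarsely'' $n^{1/3}$-separated (it's a union of two $n^{1/3}$-separated sets, so every window of length $n^{1/3}$ contains at most two support points). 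I'd like to turn this into a statement that $f$ cannot vanish at too many roots of unity unless its degree is large — ideally $\gtrsim k^3/\mathrm{polylog}$.

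\textbf{Key steps.} First, reduce to a cleaner object: write each $a \in A$ as $a = n^{1/3} a' + s_a$ with $a' \in [n^{2/3}]$ and $s_a \in [n^{1/3}]$, and similarly for $B$; the separation means the map $a \mapsto a'$ is injective on $A$ and on $B$. Then, for a prime $p$ in a suitable range around $n^{1/3}$, look at $f$ evaluated at a primitive $p$-th root of unity $\zeta$: grouping by the ``block'' $a'$, we get $f(\zeta) = \sum_{a'} c_{a'} \zeta^{n^{1/3} a'}$ where each $c_{a'}$ is a short polynomial in $\zeta$ of degree $< n^{1/3} \le p$, hence determined by a bounded number of coefficients — morally $f(\zeta)$ looks like a polynomial of degree $\sim n^{2/3}$ in $\zeta^{n^{1/3}}$ with very structured (bounded-support) coefficient vectors. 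The plan is then to run the cyclotomic argument \emph{on this reduced polynomial}: if $f$ vanishes at all $p$-th roots of unity for all primes $p$ in a dyadic range $[k/2,k]$, then because $\gcd(n^{1/3},p)=1$ the substitution $x \mapsto x^{n^{1/3}}$ is a bijection on the $p$-th roots of unity, and we should be able to argue that a single ``generic'' residue class $i \pmod p$ already forces many cancellations — i.e. pass to $\sum_{a \in A, a \equiv i} x^{a}$ vs the $B$ analogue, which are now \emph{very} sparse (size $\le n^{2/3}/p \approx n^{1/3}$) and still $n^{1/3}$-separated, and iterate or bootstrap. A cleaner route: count. There are $\sim k/\log k$ primes in $[k/2,k]$, giving $\gtrsim k^2/\log k$ distinct prime roots of unity where $f$ vanishes, but additionally the \emph{block structure} should let each prime ``cost'' $f$ roughly $p \approx k$ worth of degree rather than $1$ — because within each residue class the surviving coefficient sequence is $n^{1/3}$-separated, so can't be annihilated by a single $\Phi_p$ unless it spans $\gtrsim p \cdot n^{1/3}$ in degree. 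Multiplying, $\deg f \gtrsim (k^2/\log k)\cdot(\text{something})$, and balancing against $\deg f \le n$ with $|A|,|B|\le n^{2/3}$ (so really the ``effective length'' in block-units is $n^{2/3}$) should yield $k = \wt O(n^{1/3})$.

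\textbf{Main obstacle.} The crux is making rigorous the claim that an $n^{1/3}$-separated coefficient sequence supported in $[n]$ cannot be killed by the product $\prod_{p \in [k/2,k], p \text{ prime}} \Phi_p$ unless its degree spread is $\gtrsim k^2/\log k$ times a ``separation gain'' factor of order $k$ — i.e. proving the degree lower bound is genuinely cubic, not quadratic, in $k$. Cyclotomic divisibility is a statement about \emph{all} the coefficients at once, and separation is a support condition, so I expect I'll need a weighted or ``block-coarsened'' version of the cyclotomic argument: perhaps factor the ambient polynomial ring as (polynomials of degree $<n^{1/3}$) $\otimes$ (polynomials in $x^{n^{1/3}}$), observe that $\Phi_p(x)$ for $p>n^{1/3}$ does \emph{not} factor through this grading, and combine with a pigeonhole/dimension count over the $\sim n^{2/3}$ blocks and $\sim k/\log k$ primes. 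I also anticipate needing to handle the boundary case where $A$ and $B$ occupy the same blocks with shifted offsets, which is exactly the configuration the separation hypothesis is designed to control. Getting the logarithmic powers and the interplay between the three scales ($n^{1/3}$ for separation, $k\approx n^{1/3}$ for the prime, $n^{2/3}$ for the block count) to balance correctly is where the bulk of the technical work — and the risk — will lie.
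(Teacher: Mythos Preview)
Your proposal follows the cyclotomic-polynomial route and hopes that the $n^{1/3}$-separation hypothesis buys an extra factor of $k$ in the degree lower bound, turning the quadratic estimate $\deg f \gtrsim k^2/\log k$ into a cubic one. You correctly identify this as the crux, but the proposal does not actually supply an argument for it, and the heuristics you offer do not survive scrutiny. The claim that ``within each residue class the surviving coefficient sequence is $n^{1/3}$-separated, so can't be annihilated by a single $\Phi_p$ unless it spans $\gtrsim p\cdot n^{1/3}$ in degree'' conflates two different things: divisibility by $\Phi_p$ is the single linear condition $f(\zeta_p)=0$ on \emph{all} coefficients simultaneously and says nothing about any individual residue class being ``annihilated''; and once you restrict to a residue class mod $p$ with $p\approx n^{1/3}$, that class is already $p$-spaced, so the $n^{1/3}$-separation hypothesis adds nothing there. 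The block decomposition $a=n^{1/3}a'+s_a$ likewise does not interact cleanly with $\Phi_p$ for $p>n^{1/3}$: after substituting a primitive $p$-th root $\zeta$, the ``block'' variable $\zeta^{n^{1/3}}$ and the ``offset'' variable $\zeta$ generate the same cyclic group (since $\gcd(n^{1/3},p)=1$), so no genuine tensor structure emerges---you are back to a single polynomial of degree $<p$ in one root of unity. In short, the mechanism that would convert separation into an extra degree factor is absent, and that mechanism \emph{is} the theorem.

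The paper takes a completely different route, and in fact remarks explicitly that the author ``was not able to make progress through number theoretic arguments'' of the cyclotomic type. Instead of many roots of unity, the paper works at the single point $x=1$: the key step (Proposition~\ref{diffmoment}) is that some moment satisfies $\sum_{a\in A}a^m\neq\sum_{b\in B}b^m$ with $m=\widetilde O(n^{1/3})$, equivalently $(x-1)^{\widetilde O(n^{1/3})}\nmid f$. This in turn comes from a complex-analytic lower bound (Theorem~\ref{largevalue}) showing that any polynomial in the class $\mathcal P_n$---where the separation hypothesis is encoded as a gap in the low-degree coefficients---must be at least $\exp\bigl(-\widetilde O(n^{1/3})\bigr)$ somewhere on $[1-n^{-2/3},1]$. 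That bound is obtained via a tailored auxiliary conformal-type map $h$ (Lemma~\ref{hproperties}) whose key feature is that $|h(e^{2\pi i t})|$ decays \emph{linearly} rather than quadratically in $|t|$, combined with a maximum-modulus and harmonicity argument (Lemma~\ref{product}). The passage from a small moment $m$ to a small prime $p$ with $|A_{i,p}|\neq|B_{i,p}|$ is then a two-line reduction mod $p$. So the separation hypothesis is exploited analytically---through the shape of the coefficient gap near the constant term---rather than through any algebraic divisibility structure.
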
 

\vspace{1mm}

Although Theorem \ref{diffprime} is also equivalent to a question about a product of cyclotomic polynomials dividing a certain type of polynomial, we were not able to make progress through number theoretic arguments. Rather, we reverse the argument of Scott \cite{scott}, by noting that if there is some small $m$ so that the $m^{\text{th}}$-moments of $A$ and $B$ differ, i.e. $\sum_{a \in A} a^m \not = \sum_{b \in B} b^m$, then there is some small $p$ and some $i \in [p]_0$ so that $|A_{i,p}| \not \equiv |B_{i,p}| \modd p$ (and thus $|A_{i,p}| \not = |B_{i,p}|$).\footnote{The implication just written is actually quite obvious (see the deduction of Theorem \ref{diffprime} from Proposition \ref{diffmoment}); the implication of Scott, however, that some small $p$ and some $i \in [p]_0$ with $|A_{i,p}| \not \equiv |B_{i,p}| \pmod{p}$ implies the existence of some small $m$ with $\sum_{a \in A} a^m \not = \sum_{b \in B} b^m$ is less trivial, though basically just follows from the fact that $1_{x \equiv i \pmod{p}} \equiv 1-(x-i)^{p-1} \pmod{p}$.} 

\vspace{1mm}

The benefit of considering the ``moments" problem is that it is more susceptible to complex analytic techniques. Borwein, Erdélyi, and Kós \cite{littlewood} use complex analytic techniques to show that for any distinct $A,B \sub [n]$, there is some $m \le C\sqrt{n}$ with $\sum_{a \in A} a^m \not = \sum_{b \in B} b^m$. One proof of theirs was to show that any polynomial $p$ of degree $n$ with $|p(0)| = 1$ and coefficients bounded by $1$ in absolute value must be at least $\exp(-C\sqrt{n})$ at some point close to $1$. We were able to adapt this proof to find a small(er) $m$ such that $\sum_{a \in A} a^m \not = \sum_{b \in B} b^m$ in the case that $A,B$ are well-separated sets, and thus prove Theorem \ref{diffprime}. 

\vspace{1mm}

The adaptations we make are quite significant. See Lemma \ref{hproperties} and Lemma \ref{product}.

\vspace{1.5mm}

\section{Proof of Theorem \ref{main}}

In this section, we quickly deduce Theorem \ref{main} from our main number-theoretic theorem which we prove in Section 5. Recall we say $A \sub [n]$ is \textit{d-separated} if $|a-a'| \ge d$ for any distinct $a,a' \in A$.

\vspace{1.5mm}

\setcounter{theorem}{1}
\begin{theorem}\label{diffprime}
Let $A,B$ be distinct subsets of $[n]$ that are each $n^{1/3}$-separated. Then there is some prime $p \in [\frac{1}{2}C'n^{1/3}\log^6n,C'n^{1/3}\log^6n]$ and some $i \in [p]_0$ so that $|A_{i,p}| \not = |B_{i,p}|$. Here, $C' > 0$ is an absolute constant.
\end{theorem} 

\vspace{1.5mm}

Recall that, for a string $x = x_1,\dots,x_n \in \{0,1\}^n$ and a (sub)string $w = w_1,\dots,w_l \in \{0,1\}^l$, we defined $\pos_w(x) := \{j \in \{1,\dots,n-l+1\} : x_{j+k-1} = w_k \text{ for all } 1 \le k \le l\}$. 

\vspace{1mm}

\begin{lemma}\label{machine}
Let $m,n$ be positive integers, $i \in [m]_0$ a residue mod $m$, $q$ a prime number, $a \in [q]_0$ a residue mod $q$, and $w \in \{0,1\}^l$ a string of length $l \le m$. Then there is a determinsitic finite automaton with $2mq$ states that, for any string $x \in \{0,1\}^n$, accepts $x$ if and only if $|\{j \in \emph{\text{pos}}_w(x) : j \equiv i \pmod{m}\}| \equiv a \pmod{q}$.
\end{lemma}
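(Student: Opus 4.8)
The plan is to build the DFA as a product-style automaton that simultaneously (i) keeps a sliding window of the last $l$ input bits in order to detect occurrences of $w$, (ii) tracks the current input position modulo $m$, and (iii) tracks modulo $q$ a running count of those positions $j$ at which $w$ starts and which satisfy $j \equiv i \pmod m$. The key observation that makes the state count $2mq$ rather than $2^l mq$ is that we do not need to remember the full window: since $l \le m$ and we only care about positions modulo $m$, at each step we can decide whether a new occurrence of $w$ has just completed using a small amount of bookkeeping. Concretely, I would have the automaton track the length of the longest suffix of the input read so far that is a prefix of $w$ (this is the classical Knuth–Morris–Pratt / Aho–Corasick idea), which takes only $l+1 \le m+1$ values, together with the position mod $m$ (which takes $m$ values but is not independent of the prefix-length counter in a naive product) and the count mod $q$ (which takes $q$ values, accounting for the factor $2$ roughly, or rather $q$ exactly with the $2$ coming from a parity/acceptance flag).

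More carefully, here is the bookkeeping I would carry out. Let the state be a triple $(s, t, c)$ where $s$ records the position mod $m$ of the \emph{start} of the longest current suffix-prefix match (equivalently one can store the current position mod $m$ and the KMP prefix length, but storing the start position mod $m$ is cleaner), $t$ ranges over a small set recording the KMP match length, and $c \in [q]_0$ is the running count mod $q$. When reading a new bit $b$, update the KMP match length in the usual way using the failure function of $w$; if the match length becomes exactly $l$, then an occurrence of $w$ has just ended at the current position, so it started at position $(\text{current position}) - l + 1$; if that start position is $\equiv i \pmod m$, increment $c$ by $1$ mod $q$. The subtlety is maintaining ``current position mod $m$'': I would simply include a counter mod $m$ in the state and update it by $+1$ each step; combined with the KMP length this still only needs about $m \cdot (l{+}1) \cdot q$ states at first glance, which is too many. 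To get down to $2mq$, I would instead observe that once we fix the position mod $m$ (call it $r$, with $m$ choices) and the count mod $q$ ($q$ choices), the KMP match length is not free: but actually the honest fix is that we only ever need the KMP \emph{length} to know whether it has reached $l$, and to advance it we need the bit history — so the right move is to note that a DFA recognizing ``$w$ occurs ending here'' needs only $l+1 \le m+1$ states, but we can absorb this into the mod-$m$ counter because the relevant event (occurrence completes) can be detected by whether the mod-$m$ counter, run alongside a comparison, matches; the cleanest writeup is to take the product of the $(l{+}1)$-state pattern automaton with the $m$-state cyclic counter and the $q$-state mod-$q$ counter, then note $(l+1) m q \le 2 m q \cdot \lceil (l+1)/2\rceil$ — so actually I suspect the intended bound uses $l \le m$ together with a more economical combined automaton, and the factor $2$ is genuine. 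Let me reconsider: the standard trick is that to detect occurrences of a length-$l$ word and simultaneously know the position modulo $m \ge l$, one can use a single counter that runs $0,1,\dots$ and resets, of size $O(m)$, together with an $O(l)$ overlap tracker, and the product is $O(m)$ not $O(ml)$ because whenever the match length is positive, it is determined (as a function into a set of size $l$) by how recently the counter was in a particular residue — but this requires $w$ to be primitive-ish. Since the lemma is stated for arbitrary $w$, I will instead present the honest product construction: pattern DFA ($l+1$ states) $\times$ position-mod-$m$ counter ($m$ states) $\times$ count-mod-$q$ counter ($q$ states), giving $(l+1)mq$ states, and then remark that by reusing the position-mod-$m$ counter to also drive the pattern matching (legitimate since $l \le m$ means a fresh occurrence of $w$ cannot overlap a window boundary in a way that confuses the residue), one collapses the first two factors to $2m$, yielding $2mq$.

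So the concrete steps are: (1) define the pattern-matching DFA $M_w$ for $w$ with states $\{0,1,\dots,l\}$ and the KMP transition, where state $l$ is ``an occurrence just ended''; (2) define the product machine with a position counter mod $m$ and a count counter mod $q$, incrementing the latter exactly when $M_w$ enters state $l$ and the position counter equals $i$; (3) verify by induction on the length of the input prefix that after reading $x_1,\dots,x_j$ the count counter equals $|\{j' \le j : j' \in \pos_w(x),\, j'+l-1 \le j,\, j'\equiv i \pmod m\}| \bmod q$ — actually one must be careful to count by \emph{start} position $j'$ but \emph{detect} at \emph{end} position $j'+l-1$; (4) set the accept set to be those states whose count-component equals $a$; (5) bound the number of states, collapsing to $2mq$ via the $l \le m$ observation. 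The main obstacle, and the step I expect to require the most care, is exactly this state-count collapse from the naive $(l+1)mq$ to $2mq$: one must argue that the position-mod-$m$ counter already carries enough information to run the pattern matcher without a separate $l$-valued coordinate, which uses $l \le m$ crucially, or else accept a mild redefinition (e.g. only detecting non-overlapping occurrences, which suffices since any two occurrences of $w$ counted at positions $\equiv i \pmod m$ are at least $m \ge l$ apart and hence non-overlapping anyway). The rest — the KMP correctness and the inductive invariant — is entirely routine.
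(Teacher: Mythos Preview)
Your proposal eventually lands on the right idea, but only in a parenthetical aside, and the surrounding discussion is tangled. The key observation is the one you bury at the very end: any two occurrences of $w$ that start at positions $\equiv i \pmod m$ are at distance at least $m \ge l$, hence non-overlapping. Once you see this, KMP is irrelevant and you should discard it entirely rather than try to ``collapse'' a $(l{+}1)\times m$ product to $2m$. The paper's proof does exactly this: it writes down the $2mq$-state machine directly as $\Z_m \times \{0,1\} \times \Z_q$, where the middle coordinate is a single bit meaning ``the input read since the most recent position $\equiv i \pmod m$ has so far agreed with a prefix of $w$.'' Because $l \le m$, the position-mod-$m$ counter already tells you which character of $w$ to compare next (namely $w_{j-i+1}$), so no separate prefix-length coordinate is needed; the bit just records whether the current attempt is still alive. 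When the bit is $1$ and the counter reaches $i+l-1 \pmod m$ and the input matches $w_l$, increment the $\Z_q$ coordinate and reset the bit.

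So your step (5) is not really a ``collapse'' of a larger product; it is a replacement of the KMP automaton by a one-bit flag, justified by the non-overlap observation. Several of your other attempted explanations for the factor $2$ (``parity/acceptance flag'', ``$\lceil (l+1)/2\rceil$'') are incorrect and should be deleted. If you rewrite the argument starting from the non-overlap observation and defining the state set as $\Z_m \times \{0,1\} \times \Z_q$ from the outset, the proof becomes a few lines, matching the paper's.
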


\begin{proof}
Write $w = w_1,\dots,w_l$. We assume $l > 1$; a minor modification to the following yields the result for $l=1$. We interpret indices of $w$ mod $m$, which we may, since $l \le m$. Let the states of the DFA be $\Z_m\times\{0,1\}\times\Z_q$. The initial state is $(1,0,0)$. If $j \not \equiv i \pmod{m}$ and $\epsilon \in \{0,1\}$, set $\delta((j,0,s),\epsilon) = (j+1,0,s)$. If $j \equiv i \pmod{m}$, set $\delta((j,0,s),w_1) = (j+1,1,s)$ and $\delta((j,0,s),1-w_1) = (j+1,0,s)$. If $j \not \equiv i+l-1 \pmod{m}$, set $\delta((j,1,s),w_{j-i+1}) = (j+1,1,s)$ and $\delta((j,1,s),1-w_{j-i+1}) = (j+1,0,s)$. Finally, if $j \equiv i+l-1 \pmod{m}$, set $\delta((j,1,s),w_l) = (j+1,0,s+1)$ and $\delta((j,1,s),1-w_l) = (j+1,0,s)$. The set of accept states is $\Z_m\times\{0,1\}\times\{a\}$. 
\end{proof}

\vspace{1.5mm}

\setcounter{theorem}{0}
\begin{theorem}\label{main}
For any distinct $x,y \in \{0,1\}^n$, there is a deterministic finite automaton with $O(n^{1/3}\log^7n)$ states that accepts $x$ but not $y$.
\end{theorem}

\begin{proof}
Let $x_1,\dots,x_n$ and $y_1,\dots,y_n$ be two distinct strings in $\{0,1\}^n$. If $x_k \not = y_k$ for some $k < 2n^{1/3}$, then we are done\footnote{Simply use a DFA on $2n^{1/3}$ states that accepts exactly those strings starting with $x_1,\dots,x_{2n^{1/3}}$.}, so we may suppose otherwise. Let $k \ge 2n^{1/3}$ be the first index with $x_k \not = y_k$. Let $w' = x_{k-2n^{1/3}+1},\dots,x_{k-1}$ be a (common sub)string of $x$ and $y$ of length $2n^{1/3}-1$. By Lemma $1$ and Lemma $2$ of \cite{robson}, there is some choice $w \in \{w'0,w'1\}$ for which $A := \pos_w(x)$ is $n^{1/3}$-separated and $B := \pos_w(y)$ is $n^{1/3}$-separated. By the choice of $k$, we have $A \not = B$, so Theorem \ref{diffprime} implies there is some prime $p \in [\frac{1}{2}C'n^{1/3}\log^6n,C'n^{1/3}\log^6n]$ and some $i \in [p]_0$ for which $|A_{i,p}| \not = |B_{i,p}|$. Since $|A_{i,p}|$ and $|B_{i,p}|$ are at most $n$, there is some prime $q = O(\log n)$ for which $|A_{i,p}| \not \equiv |B_{i,p}| \pmod{q}$. Since $|w| = 2n^{1/3} \le p$, by Lemma \ref{machine} there is a deterministic finite automaton with $2pq = O(n^{1/3}\log^7n)$ states that accepts $x$ but not $y$. 
\end{proof}

\vspace{1.5mm}

\section{Proof of Theorem \ref{diffprime}}

In this section, we deduce Theorem \ref{diffprime} from the following complex analytic theorem, which we prove in Section 6.

\vs

Let $\mathcal{P}_n$ denote the collection of all polynomials $p(x) = 1-\sigma x^d+\sum_{j=n^{1/3}}^n a_jx^j \in \C[x]$ such that $1 \le d < n^{1/3}$, $\sigma \in \{0,1\}$, and $|a_j| \le 1$ for each $j$.

\vspace{1.5mm}

\setcounter{theorem}{2}
\begin{theorem}\label{largevalue}
There is some absolute constant $C_1 > 0$ so that for all $n \ge 2$ and all $p \in \mathcal{P}_n$, it holds that $\max_{x \in [1-n^{-2/3},1]} |p(x)| \ge \exp(-C_1n^{1/3}\log^5n)$. 
\end{theorem}

\vspace{1.5mm}

The deduction of Theorem \ref{diffprime} from Theorem \ref{largevalue} follows from first showing the polynomial $p(x) := \sum_{n \in A} x^n - \sum_{n \in B} x^n$ cannot be divisible by a large power of $x-1$. We will use part of Lemma 5.4 of \cite{littlewood}, stated below.

\vspace{1.5mm}

\begin{lemma}\label{order}
Suppose the polynomial $f(x) = \sum_{j=0}^n a_j x^j \in \C[x]$ has $|a_j| \le 1$ for each $j$. If $(x-1)^k$ divides $f(x)$, then $\max_{1-\frac{k}{9n} \le x \le 1} |f(x)| \le (n+1)(\frac{e}{9})^k$.
\end{lemma}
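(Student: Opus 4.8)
The plan is to expand $f$ about the point $1$ and estimate the resulting series term by term. Since $(x-1)^k$ divides $f$, we have $f^{(j)}(1)=0$ for $0\le j<k$, so the (finite) Taylor expansion of $f$ at $1$ reads
\[
f(x)=\sum_{j=k}^{n}c_j(x-1)^j,\qquad c_j:=\frac{f^{(j)}(1)}{j!}=\sum_{i=j}^{n}a_i\binom{i}{j}.
\]
Because $|a_i|\le 1$, the hockey-stick identity $\sum_{i=j}^{n}\binom{i}{j}=\binom{n+1}{j+1}$ gives $|c_j|\le\binom{n+1}{j+1}$. Hence, writing $t:=1-x\in[0,\tfrac{k}{9n}]$ for $x$ in the stated interval,
\[
|f(x)|\le\sum_{j=k}^{n}\binom{n+1}{j+1}t^{\,j}.
\]

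Next I would observe that this sum is essentially geometric: the ratio of the $(j{+}1)$-st summand to the $j$-th is $t\cdot\frac{n-j}{j+2}$, which is decreasing in $j$ and so, for $k\le j\le n$, is at most $\frac{k}{9n}\cdot\frac{n-k}{k+2}\le\frac19$ (the last step because $k(n-k)\le kn+2n$). Summing the geometric majorant,
\[
|f(x)|\le\frac98\,\binom{n+1}{k+1}t^{\,k}.
\]
It then remains to bound $\binom{n+1}{k+1}t^{\,k}$. Assuming $f\not\equiv 0$ (else there is nothing to prove) we have $k\le n$, so $\binom{n+1}{k+1}=\frac{(n+1)\,n(n-1)\cdots(n-k+1)}{(k+1)!}\le\frac{(n+1)\,n^{k}}{(k+1)!}$, and since $(k+1)!=(k+1)\,k!\ge(k+1)(k/e)^{k}$ this yields $\binom{n+1}{k+1}\le\frac{n+1}{k+1}\bigl(\tfrac{ne}{k}\bigr)^{k}$. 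Multiplying by $t^{\,k}\le\bigl(\tfrac{k}{9n}\bigr)^{k}$ collapses the $n$'s and $k$'s and leaves $\binom{n+1}{k+1}t^{\,k}\le\frac{n+1}{k+1}\bigl(\tfrac{e}{9}\bigr)^{k}$. Combining with the previous display and using $k\ge 1$, so that $\frac98\cdot\frac1{k+1}\le\frac{9}{16}<1$ (the case $k=0$ being the trivial $|f(1)|\le n+1$), gives $|f(x)|\le(n+1)\bigl(\tfrac{e}{9}\bigr)^{k}$, as claimed.

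There is no real obstacle in this argument; the only point requiring care is tightening the estimates enough to produce the precise constants. A cruder bound such as $\binom{n+1}{k+1}\le\bigl(\tfrac{e(n+1)}{k+1}\bigr)^{k+1}$ would cost an extra factor of $e$, and a larger radius than $\tfrac{k}{9n}$ would spoil the geometric ratio; it is exactly this choice of radius that forces the ratio below $\tfrac19$ and makes the $\tfrac98$ prefactor, together with the $\tfrac1{k+1}$ gain, small enough to absorb all the slack.
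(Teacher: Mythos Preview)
Your argument is correct. The paper does not actually prove this lemma; it simply quotes it as (part of) Lemma~5.4 of Borwein--Erd\'elyi--K\'os \cite{littlewood}, so your Taylor-expansion-plus-hockey-stick argument supplies a self-contained proof where the paper has none, and hits the exact constants $(e/9)^k$ and $n+1$ without slack.
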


\vspace{0.25mm}

\begin{proposition}\label{maxorder}
There exists an absolute constant $C > 0$ so that for all $n \ge 1$ and all $p(x) \in \mathcal{P}_n$, the polynomial $(x-1)^{\lfloor Cn^{1/3}\log^5n \rfloor}$ does not divide $p(x)$. 
\end{proposition}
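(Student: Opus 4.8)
The plan is to obtain a contradiction from the assumption that $(x-1)^k$ divides $p(x)$, where $k=\lfloor Cn^{1/3}\log^5 n\rfloor$ for a large absolute constant $C$ fixed at the end. The engine is a clash between two estimates for $\max_{x\in[1-n^{-2/3},1]}|p(x)|$. On one side, Theorem~\ref{largevalue} supplies the lower bound $\exp(-C_1 n^{1/3}\log^5 n)$, valid for every $p\in\mathcal{P}_n$. On the other side, Lemma~\ref{order} applied to a polynomial divisible by a high power of $x-1$ forces an exponentially small upper bound of the shape $(n+1)(e/9)^k$. Since $\log(9/e)>0$, this upper bound decays faster in $k$ than the lower bound decays in $n^{1/3}\log^5 n$, so once $C$ is large relative to $C_1/\log(9/e)$ the two are incompatible.

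Carrying this out, I would first verify that every $p(x)=1-\sigma x^d+\sum_{j=n^{1/3}}^n a_j x^j\in\mathcal{P}_n$ satisfies the hypotheses of Lemma~\ref{order}: its degree is at most $n$, and since $1\le d<n^{1/3}\le j$ for every index $j$ occurring in the sum, the constant term, the term $-\sigma x^d$, and the terms of the sum do not overlap, so every coefficient of $p$ belongs to $\{1,-\sigma\}\cup\{a_j\}$ and hence has modulus at most $1$. Consequently, if $(x-1)^k\mid p(x)$ then $\max_{1-k/(9n)\le x\le 1}|p(x)|\le (n+1)(e/9)^k$. Next, as soon as $n$ is large enough that $C\log^5 n\ge 11$, we have $k\ge 9n^{1/3}$, hence $k/(9n)\ge n^{-2/3}$, hence $[1-n^{-2/3},1]\subseteq[1-k/(9n),1]$; combining this with Theorem~\ref{largevalue} gives
\[
\exp(-C_1 n^{1/3}\log^5 n)\le (n+1)(e/9)^k .
\]
Taking logarithms and using $k\ge Cn^{1/3}\log^5 n-1$ together with $\log(9/e)>0$, this rearranges to
\[
\bigl(Cn^{1/3}\log^5 n-1\bigr)\log(9/e)\le \log(n+1)+C_1 n^{1/3}\log^5 n .
\]
Dividing through by $n^{1/3}\log^5 n$ and letting $n\to\infty$, the left-hand side tends to $C\log(9/e)$ while the right-hand side tends to $C_1$, so fixing $C:=\lceil C_1/\log(9/e)\rceil+3$ yields a contradiction for every $n$ exceeding some absolute constant $n_0$.

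To handle the remaining range $2\le n\le n_0$ (we may assume $n\ge 2$, so that $\log^5 n>0$), I would simply enlarge $C$ so that $\lfloor Cn^{1/3}\log^5 n\rfloor>n$ for all such $n$; since the constant term of $p$ equals $1$, $p$ is a nonzero polynomial of degree at most $n$ and therefore cannot be divisible by $(x-1)^k$ for any $k>n$. Enlarging $C$ only strengthens the inequality used in the previous paragraph, so $n_0$ need not be revisited. Taking $C$ large enough to serve in both regimes completes the proof.

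I do not anticipate a serious obstacle: all the analytic work is packaged in Theorem~\ref{largevalue} and Lemma~\ref{order}, and what is left is bookkeeping of constants. The one point that needs a little care is the inclusion $[1-n^{-2/3},1]\subseteq[1-k/(9n),1]$, which is exactly what forces $k$ to be at least of order $n^{1/3}$; this is comfortably met because $k$ is of order $n^{1/3}\log^5 n$.
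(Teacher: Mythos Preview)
Your proposal is correct and follows essentially the same approach as the paper: assume $(x-1)^k\mid p(x)$ with $k=\lfloor Cn^{1/3}\log^5 n\rfloor$, then derive a contradiction by sandwiching $\max_{x\in[1-n^{-2/3},1]}|p(x)|$ between the upper bound from Lemma~\ref{order} and the lower bound from Theorem~\ref{largevalue}, using the interval inclusion $[1-n^{-2/3},1]\subseteq[1-k/(9n),1]$. Your version is somewhat more careful than the paper's own proof in verifying the hypotheses of Lemma~\ref{order}, making the interval inclusion explicit, and treating the small-$n$ regime separately.
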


\begin{proof}
Take $C > 0$ large. Take $p(x) \in \mathcal{P}_n$. Suppose for the sake of contradiction that $(x-1)^{Cn^{1/3}\log^5n}$ divided $p(x)$. Then, by Lemma \ref{order} and Theorem \ref{largevalue}, \begin{align*}(n+1)(\frac{e}{9})^{Cn^{1/3}\log^5n} &\ge \max_{x \in [1-\frac{C}{9}n^{-2/3}\log^5n,1]} |p(x)| \\ &\ge \max_{x \in [1-n^{-2/3},1]} |p(x)| \\ &\ge e^{-C_1n^{1/3}\log^5n}, \end{align*} which is a contradiction if $C$ is large enough.
\end{proof}

\vspace{0.5mm}

We now exploit the (well-known) equivalence between common moments and a large vanishing of the associated polynomial at $x=1$. 

\begin{proposition}\label{diffmoment}
Let $A,B$ be distinct subsets of $[n]$ that are each $n^{1/3}$-separated. Then there is some non-negative integer $m = O(n^{1/3}\log^5n)$ such that $\sum_{a \in A} a^m \not = \sum_{b \in B} b^m$. 
\end{proposition}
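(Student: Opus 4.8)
The plan is to derive Proposition \ref{diffmoment} from Proposition \ref{maxorder} by the standard dictionary between equality of low-order moments and high-order vanishing of the associated generating polynomial at $x=1$. Suppose, for contradiction, that $\sum_{a\in A} a^m = \sum_{b\in B} b^m$ for all non-negative integers $m$ up to some threshold $M = \lfloor Cn^{1/3}\log^5 n\rfloor$ (with $C$ the constant from Proposition \ref{maxorder}). I want to conclude that $(x-1)^{M}$ divides the polynomial $g(x) := \sum_{a\in A} x^a - \sum_{b\in B} x^b$, and then check that (a suitable normalization of) $g$ lies in $\mathcal{P}_n$, contradicting Proposition \ref{maxorder}.

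For the divisibility step, the cleanest route is to pass to the operator $x\frac{d}{dx}$: applying it $m$ times to $x^j$ gives $j^m x^j$, so $\big(x\frac{d}{dx}\big)^m g(x)\big|_{x=1} = \sum_{a\in A} a^m - \sum_{b\in B} b^m = 0$ for all $0\le m\le M$. Since the operators $\{(x\frac{d}{dx})^m : 0\le m\le M\}$ and the operators $\{\frac{d^m}{dx^m} : 0\le m\le M\}$ span the same space (each is a triangular combination of the other, the change of basis being given by Stirling numbers), it follows that $g^{(m)}(1)=0$ for $0\le m\le M$, i.e. $(x-1)^{M+1}$ divides $g(x)$; in particular $(x-1)^{M}$ divides $g(x)$. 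Alternatively one can argue directly via finite differences, but the derivative formulation is shortest.

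Next I must exhibit $g$ (or a scalar multiple of it) as a member of $\mathcal{P}_n$. Here is where the $n^{1/3}$-separation hypothesis enters. Because $A$ and $B$ are distinct and each $n^{1/3}$-separated, I claim the smallest index at which the coefficient sequences of $\sum_{a\in A}x^a$ and $\sum_{b\in B}x^b$ differ, call it $d$, can be taken to satisfy $1\le d < n^{1/3}$ after a harmless shift. Concretely: let $d_0$ be the least element of the symmetric difference $A\triangle B$; WLOG $d_0\in A\setminus B$. Dividing $g$ by $x^{d_0}$ (which does not affect divisibility by $(x-1)^M$ since $x^{d_0}$ is a unit at $x=1$) gives a polynomial whose constant term is $\pm1$ and whose remaining nonzero coefficients sit at indices $a-d_0$ or $b-d_0$. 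The separation of $A$ guarantees that the \emph{next} element of $A$ after $d_0$ is at least $d_0+n^{1/3}$; but $B$ could a priori contribute a coefficient at some index in $(d_0, d_0+n^{1/3})$ — wait, it cannot, since $d_0$ was chosen minimal in $A\triangle B$, so every index below $d_0+n^{1/3}$ other than $d_0$ is either in both $A$ and $B$ (contributing $0$ to $g$) or in neither. Hence after dividing by $x^{d_0}$ and negating if necessary, $g/x^{d_0} = -1 + \sum_{j\ge n^{1/3}} a_j x^j$ with all $|a_j|\le 1$ (in fact $a_j\in\{-1,0,1\}$, and the degree is at most $n$), which is exactly the shape $1-\sigma x^d + \sum_{j=n^{1/3}}^n a_j x^j$ with $\sigma=0$ — actually with the constant term being $-1$ rather than $+1$; multiplying through by $-1$ fixes the sign, giving a genuine element of $\mathcal{P}_n$ with $\sigma=0$, $d$ irrelevant. (If one prefers $\sigma=1$ one can instead not divide all the way; either way the membership is clear.) Then Proposition \ref{maxorder} says $(x-1)^M$ does not divide this polynomial, hence does not divide $g$, the desired contradiction. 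Therefore some $m\le M = O(n^{1/3}\log^5 n)$ has $\sum_{a\in A}a^m\ne\sum_{b\in B}b^m$.

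The main obstacle — and it is a minor one — is bookkeeping around the definition of $\mathcal{P}_n$: making sure the shifted/normalized polynomial genuinely has its first ``low-degree'' nonzero coefficient below $n^{1/3}$ and all subsequent nonzero coefficients at degree $\ge n^{1/3}$, which is precisely what the $n^{1/3}$-separation of $A$ and $B$ (plus minimality of $d_0$ in the symmetric difference) delivers. The derivative/Stirling-number identity is routine, and everything else is a direct invocation of Proposition \ref{maxorder}.
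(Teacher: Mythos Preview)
Your approach is the same as the paper's: form $g(x)=\sum_{a\in A}x^a-\sum_{b\in B}x^b$, normalize by dividing out the lowest power of $x$ (and adjusting sign) to land in $\mathcal{P}_n$, and use Proposition~\ref{maxorder} together with the standard equivalence between ``first $M$ moments agree'' and ``$(x-1)^{M+1}\mid g$''. The paper phrases this directly (find the least $k$ with $f^{(k)}(1)\ne 0$), you phrase it contrapositively via $(x\tfrac{d}{dx})^m$ and Stirling numbers; these are the same argument.

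There is, however, a genuine slip in your verification that the normalized polynomial lies in $\mathcal{P}_n$. You write: ``$d_0$ was chosen minimal in $A\triangle B$, so every index below $d_0+n^{1/3}$ other than $d_0$ is either in both $A$ and $B$ \dots\ or in neither.'' Minimality of $d_0$ only controls indices \emph{below} $d_0$; it says nothing about $(d_0,d_0+n^{1/3})$. Indeed, take $A=\{1,\,\lceil n^{1/3}\rceil+1\}$ and $B=\{2\}$: then $d_0=1$ and $g(x)/x=1-x+x^{\lceil n^{1/3}\rceil}$, which has a nonzero coefficient at degree $1<n^{1/3}$. The correct reasoning is: since $d_0\in A$, the $n^{1/3}$-separation of $A$ forces $A\cap(d_0,d_0+n^{1/3})=\emptyset$; and the $n^{1/3}$-separation of $B$ forces $|B\cap(d_0,d_0+n^{1/3})|\le 1$. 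Hence $g(x)/x^{d_0}$ has at most one further nonzero coefficient below degree $n^{1/3}$, necessarily equal to $-1$, which is exactly the $-\sigma x^d$ term allowed in the definition of $\mathcal{P}_n$. So you may well have $\sigma=1$, not $\sigma=0$ as you asserted; once this is fixed the proof goes through.
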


\begin{proof}
Let $f(x) = \sum_{j=0}^n \epsilon_j x^j$, where $\epsilon_j := 1_A(j)-1_B(j)$. Let $\tilde{f}(x) = \frac{f(x)}{x^r}$, where $r$ is maximal with respect to $\epsilon_0,\dots,\epsilon_{r-1} = 0$. We may assume without loss of generality that $\tilde{f}(0) = 1$. Then the fact that $A,B$ are $n^{1/3}$-separated implies $\tilde{f}(x) \in \mathcal{P}_n$. By Proposition \ref{maxorder}, $(x-1)^{Cn^{1/3}\log^5n}$ does not divide $\tilde{f}(x)$ and thus does not divide $f(x)$. This means that there is some non-negative integer $k \le Cn^{1/3}\log^5n-1$ so that $f^{(k)}(1) \not = 0$. Take a minimal such $k$. If $k = 0$, we're of course done. Otherwise, since $f^{(m)}(1) = \sum_{j=0}^n j(j-1)\dots(j-m+1)\ep_j$ for $m \ge 1$, it's easy to inductively see that $\sum_{j \in A} j^m = \sum_{j \in B} j^m$ for all $0 \le m \le k-1$ and then $\sum_{j \in A} j^k \not = \sum_{j \in B} j^k$.
\end{proof}

\vspace{0.5mm}

We can now deduce Theorem \ref{diffprime}. 

\vspace{1.5mm}

\setcounter{theorem}{1}
\begin{theorem}\label{diffprime}
Let $A,B$ be distinct subsets of $[n]$ that are each $n^{1/3}$-separated. Then there is some prime $p \in [\frac{1}{2}C'n^{1/3}\log^6n,C'n^{1/3}\log^6n]$ and some $i \in [p]_0$ so that $|A_{i,p}| \not = |B_{i,p}|$. Here, $C' > 0$ is an absolute constant.
\end{theorem}

\begin{proof}
By Proposition \ref{diffmoment}, take $m = O(n^{1/3}\log^5n)$ such that $\sum_{a \in A} a^m \not = \sum_{b \in B} b^m$. Since $\left|\sum_{a \in A} a^m - \sum_{b \in B} b^m\right| \le n\hspace{.5mm} n^m \le \exp(O(n^{1/3}\log^6n))$, there is some prime $p \in [\frac{1}{2}C'n^{1/3}\log^6n,C'n^{1/3}\log^6n]$ such that $\sum_{a \in A} a^m \not \equiv \sum_{b \in B} b^m \pmod{p}$. Noting that $\sum_{a \in A} a^m \equiv \sum_{i=0}^{p-1} |A_{i,p}|i^m \pmod{p}$ and $\sum_{b \in B} b^m \equiv \sum_{i=0}^{p-1} |B_{i,p}| i^m \pmod{p}$, we see that there is some $i \in [p]_0$ for which $|A_{i,p}| \not \equiv |B_{i,p}| \pmod{p}$. 
\end{proof}

\vspace{1.5mm}

\section{Proof of Theorem \ref{largevalue}}

In this section, we finish off the proof of Theorem \ref{main} by proving the needed theorem about sparse Littlewood polynomials being ``large" somewhere near $1$.

\vspace{1mm}

Recall that $\mathcal{P}_n$ denotes the collection of all polynomials $p(x) = 1-\sigma x^d+\sum_{j=n^{1/3}}^n a_jx^j$ in $\C[x]$ such that $1 \le d < n^{1/3}$, $\sigma \in \{0,1\}$, and $|a_j| \le 1$ for each $j$.

\vspace{1.5mm}

\begin{theorem}\label{largevalue}
There is some absolute constant $C_1 > 0$ so that for all $n \ge 2$ and all $p \in \mathcal{P}_n$, it holds that $\max_{x \in [1-n^{-2/3},1]} |p(x)| \ge \exp(-C_1n^{1/3}\log^5n)$. 
\end{theorem}

\vspace{1mm}

For $a > 0$, define $\widetilde{E}_a$ to be the ellipse with foci at $1-a$ and $1-a+\frac{1}{4}a$ and with major axis $[1-a-\frac{a}{32},1-a+\frac{9a}{32}]$. We borrow\footnote{They state Lemma \ref{hadamard} for $p \in \mathcal{S}$, where they define $\mathcal{S}$ to be the set of all analytic functions $f$ on the (open) unit disk such that $|f(z)| \le \frac{1}{1-|z|}$ for each $z \in \mathbb{D}$. It is clear $\mathcal{P}_n \subseteq \mathcal{S}$ for each $n$.} Corollary 5.3 from \cite{littlewood}: 

\begin{lemma}\label{hadamard}
For every $n \ge 1$, $p \in \mathcal{P}_n$, and $a > 0$, we have $\left(\max_{z \in \widetilde{E}_a} |p(z)|\right)^2 \le \frac{64}{39a}\max_{x \in [1-a,1]} |p(x)|$. 
\end{lemma}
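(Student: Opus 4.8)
The statement to prove is Lemma \ref{hadamard}: for $p \in \mathcal{P}_n$ and $a > 0$,
\[
\left(\max_{z \in \widetilde{E}_a} |p(z)|\right)^2 \le \frac{64}{39a}\max_{x \in [1-a,1]} |p(x)|.
\]
Since this is quoted verbatim as Corollary 5.3 of \cite{littlewood}, my job is to reconstruct the short argument behind it. The natural engine is a \emph{Hadamard three-lines / three-circles}-type inequality, adapted to ellipses: the subharmonicity of $\log|p|$ lets one bound $|p|$ on an inner ellipse by a weighted geometric mean of bounds on a degenerate outer ``ellipse'' (the segment $[1-a,1]$, which is the limiting thin ellipse with those foci) and a crude bound on a larger region coming from $p \in \mathcal{S}$, i.e. $|p(z)| \le (1-|z|)^{-1}$ on the unit disk. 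The exponent $2$ on the left and the constant $64/(39a)$ on the right are exactly what one expects from such a two-term interpolation, where the ``$2$'' is the reciprocal of the harmonic-measure weight placed on the segment.

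\textbf{Key steps, in order.} First, set up the conformal picture: the Joukowski-type map $\psi(w) = \tfrac{1}{2}(w + w^{-1})$ carries the annulus $\{r < |w| < R\}$ onto a family of confocal ellipses with foci $\pm 1$; after an affine normalization sending $\pm 1$ to the foci $1-a$ and $1-a+\tfrac14 a$ of $\widetilde{E}_a$, the segment joining the foci is the image of the unit circle $|w|=1$, and $\widetilde{E}_a$ — whose major axis $[1-a-\tfrac{a}{32},\,1-a+\tfrac{9a}{32}]$ has half-length $\tfrac{a}{32}+\tfrac{a}{8} = \tfrac{5a}{32}$ relative to center, versus focal half-distance $\tfrac{a}{8}$ — is the image of a specific circle $|w| = \rho$ with $\rho$ a fixed absolute constant (one reads off $\tfrac12(\rho+\rho^{-1})\cdot\tfrac{a}{8} = \tfrac{5a}{32}$, giving $\rho+\rho^{-1} = 5/4$... here one must instead take the $\tfrac14 a$ scaling into account correctly; the point is $\rho$ is an explicit number independent of $a$). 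Second, pull $p\circ(\text{affine})\circ\psi$ back to an annulus and apply the Hadamard three-circles inequality there: $\max_{|w|=\rho}|p| \le \big(\max_{|w|=1}|p|\big)^{1-\theta}\big(\max_{|w|=R}|p|\big)^{\theta}$ where $\theta = \log\rho/\log R$ and $R$ is chosen so that the larger ellipse $\psi(R\cdot)$ still lies in the open unit disk $\mathbb{D}$. Third, bound the two factors: $\max_{|w|=1}|p| = \max_{x\in[1-a,1]}|p(x)|$ by construction, and $\max_{|w|=R}|p| \le \max_{z\in\mathbb{D}}(1-|z|)^{-1}$-type estimate, which for the relevant region is $O(1/a)$ since the outer ellipse sits at distance $\gtrsim a$ from the unit circle. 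Fourth, choose $R$ (equivalently the outer ellipse) so that $\theta = 1/2$ exactly — this is where the squared left-hand side comes from — and track the constant: the factor $\big(\max_{|w|=R}|p|\big)^{1/2}$ contributes an $O(a^{-1/2})$, which after squaring the whole inequality becomes the $\tfrac{64}{39a}$. One verifies the numerics $\tfrac{64}{39}$ by computing the precise distance from the chosen outer ellipse to $\partial\mathbb{D}$ and the precise value of $(1-|z|)^{-1}$ along it.

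\textbf{Main obstacle.} The crux is the bookkeeping that forces $\theta = 1/2$ \emph{simultaneously with} keeping the outer ellipse inside $\mathbb{D}$ and getting the clean constant $64/39$. Concretely: the geometry of $\widetilde{E}_a$ is rigidly specified (foci $1-a$, $1-a+\tfrac14 a$; major axis endpoints $1-a-\tfrac{a}{32}$, $1-a+\tfrac{9a}{32}$), so I do not get to move the \emph{inner} ellipse — its parameter $\rho$ in the annulus is pinned down. Therefore the only freedom is the outer radius $R$, and I must check that the unique $R$ with $\log\rho/\log R = 1/2$, i.e. $R = \rho^2$, yields an ellipse $\psi(\rho^2\cdot)$ that is (a) contained in $\mathbb{D}$ and (b) whose sup of $(1-|z|)^{-1}$ is exactly $\tfrac{64}{39a}$ after the square root and square cancel appropriately. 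If $R=\rho^2$ happens to push the outer ellipse out of $\mathbb{D}$, the argument as sketched fails and one instead needs a variant of the $\mathcal{S}$-bound valid on a boundary arc — but since \cite{littlewood} state exactly this inequality, the parameters must work out, and the main labor is the (routine but fiddly) verification of the constant $\tfrac{64}{39}$ via explicit computation with $\psi$ and the normalization. Everything else — subharmonicity of $\log|p|$, the three-circles inequality, $\mathcal{P}_n \subseteq \mathcal{S}$ — is standard.
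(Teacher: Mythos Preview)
The paper does not give its own proof of this lemma; it simply borrows it as Corollary~5.3 of \cite{littlewood}. Your reconstruction via the Joukowski map and Hadamard three--circles is exactly the argument used there, and it is correct in outline. Two small corrections make the constants fall out cleanly: first, the unit circle $|w|=1$ maps to the segment between the \emph{foci}, namely $[1-a,\,1-\tfrac{3a}{4}]\subset[1-a,1]$, so your claimed equality is really an inequality (in the right direction); second, your arithmetic slipped --- from $\tfrac12(\rho+\rho^{-1})\cdot\tfrac{a}{8}=\tfrac{5a}{32}$ one gets $\rho+\rho^{-1}=\tfrac{5}{2}$, hence $\rho=2$ and $R=\rho^2=4$, and then the rightmost point of the outer ellipse is $1-\tfrac{7a}{8}+\tfrac{a}{8}\cdot\tfrac12(4+\tfrac14)=1-\tfrac{39a}{64}$, which is also the point of maximal modulus on that ellipse for small $a$, giving $\max(1-|z|)^{-1}=\tfrac{64}{39a}$ exactly. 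With those fixes your three-circles step $M(\rho)^2\le M(1)\,M(R)$ yields the stated inequality on the nose.
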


\vspace{3mm}

By Lemma \ref{hadamard}, in order to prove Theorem \ref{largevalue} it suffices to show: 

\vspace{1.5mm}

\begin{proposition}\label{mmp}
There is an absolute constant $C > 0$ so that for every $n \ge 1$ and every $p \in \mathcal{P}_n$, it holds that $\left(\max_{z \in \widetilde{E}_{n^{-2/3}}} |p(z)|\right)^2 \ge \exp(-Cn^{1/3}\log^5n)$. 
\end{proposition}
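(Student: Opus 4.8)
\textbf{Proof proposal for Proposition \ref{mmp}.}

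The plan is to follow the Borwein--Erdélyi--Kós strategy of constructing an auxiliary function that is large on the ellipse and controlled at $1$, but to do this in a way that exploits the \emph{sparsity} of $p \in \mathcal{P}_n$ (no monomials of degree between $d < n^{1/3}$ and $n^{1/3}$) and the fact that the ellipse $\widetilde{E}_{n^{-2/3}}$ sits at distance $\asymp n^{-2/3}$ from $1$. First I would write $p(z) = 1 - \sigma z^d + z^{n^{1/3}} g(z)$, where $g(z) = \sum_{j=0}^{n - n^{1/3}} a_{j+n^{1/3}} z^j$ has $|g(z)| \le \frac{1}{1-|z|}$ on the disk. On the ellipse $\widetilde{E}_{n^{-2/3}}$ we have $1 - |z| \asymp n^{-2/3}$, so $|z^{n^{1/3}} g(z)| \le (1 - c n^{-2/3})^{n^{1/3}} \cdot O(n^{2/3}) = O(n^{2/3})$, which is only polynomially large --- this crude bound is useless by itself, and the real content is to damp the $z^{n^{1/3}}g(z)$ term and the $z^d$ term simultaneously by multiplying $p$ by a carefully chosen polynomial $Q(z)$ that is small on $\widetilde{E}_{n^{-2/3}}$ where we need it and not too small at the point of $\widetilde{E}_{n^{-2/3}}$ nearest $1$.

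Concretely, I expect the key device to be the multiplier lemmas advertised in the introduction as Lemma \ref{hproperties} and Lemma \ref{product}: one builds a function $h$ (a product of shifted/scaled polynomials, akin to the $\prod (1 - (z/\rho_k))$-type products in \cite{littlewood}) with the properties (i) $|h(z)|$ is bounded below by $\exp(-O(n^{1/3}\log^5 n))$ at some point $x_0 \in \widetilde{E}_{n^{-2/3}} \cap \R$ close to $1$, and (ii) $|h(z)|$ is small enough on the rest of $\widetilde{E}_{n^{-2/3}}$ that the product $p(z)h(z)$ is, up to the controllable error, governed by the constant term $1$ near $x_0$. Then one applies a maximum-modulus / Hadamard three-lines type estimate (this is where Lemma \ref{hadamard} has already been peeled off, so here the job is the lower bound on the ellipse) to transfer a lower bound on $|p(x_0)h(x_0)|$ --- essentially $|p(x_0)| \ge |1 - \sigma x_0^d| - |x_0^{n^{1/3}} g(x_0)| \gtrsim d\,n^{-2/3} - (\text{small})$, using that $1 - x_0^d \asymp d(1-x_0)$ and $d \ge 1$ --- into the claimed lower bound $\max_{z \in \widetilde{E}_{n^{-2/3}}} |p(z)|^2 \ge \exp(-Cn^{1/3}\log^5 n)$. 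The logarithmic powers enter through the number of factors in $h$ (roughly $\log n$ many geometric scales between $n^{-2/3}$ and $1$) and the per-factor loss, giving $\log^5 n$ after optimizing; I would not try to track the exact exponent, consistent with the paper's stated indifference to it.

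The main obstacle, I expect, is constructing $h$ so that properties (i) and (ii) hold \emph{simultaneously} with the right quantitative tradeoff: one needs $h$ to kill the factor $\frac{1}{1-|z|} \cdot |z|^{n^{1/3}}$ growth coming from the non-sparse tail $z^{n^{1/3}}g(z)$ uniformly on the ellipse while only paying $\exp(O(n^{1/3}\log^5 n))$ in size at $x_0$ --- the exponent $n^{1/3}$ is exactly the "budget" dictated by the degree-$n^{1/3}$ gap and the $n^{-2/3}$ scale ($n^{1/3} \cdot n^{-2/3} \cdot n = $ matches up), and making the product telescope across the dyadic scales from $1 - n^{-2/3}$ up toward $1$ without the errors compounding is the delicate part. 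A secondary subtlety is handling the low-degree term $-\sigma z^d$ with $1 \le d < n^{1/3}$ uniformly in $d$: when $d$ is as small as $1$ the quantity $1 - \sigma x_0^d$ is only $\asymp n^{-2/3}$, so the lower bound at $x_0$ is genuinely tiny and every factor of $h$ must be accounted for to beat it; fortunately $n^{-2/3} \ge \exp(-O(\log n))$, so this is absorbed into the $\exp(-Cn^{1/3}\log^5 n)$ with room to spare. Once $h$ is in hand, the remaining steps --- the pointwise lower bound at $x_0$, the uniform upper bound on the ellipse, and assembling them via maximum modulus --- should be routine.
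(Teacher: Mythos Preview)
Your proposal misidentifies the role of $h$ and with it the entire architecture of the argument. In the paper, $h$ is not a multiplier polynomial $Q$ that one \emph{multiplies} against $p$; it is a map from the closed unit disk into $\{|w|\le 1-a\}$ (with $a=n^{-2/3}$) that one \emph{composes} with $p$. One then forms $g(z)=\prod_{j=0}^{m-1} p\bigl(h(e^{2\pi i j/m}z)\bigr)$ with $m\asymp n^{2/3}$, observes $|g(0)|=|p(0)|^m=1$, and applies the maximum modulus principle on $\partial\mathbb{D}$. For any boundary point, exactly a bounded number of the arguments $h(e^{2\pi i j/m}z)$ land inside $\widetilde{E}_a^\circ$ (this is the content of Lemma~\ref{hproperties}), and those factors contribute $(\max_{\widetilde{E}_a}|p|)^2$; the remaining $\sim n^{2/3}$ factors must then be shown to have product at most $\exp(Cn^{1/3}\log^5 n)$. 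That last step is the heart of the matter and is handled by Lemma~\ref{product}, whose key input is that $w\mapsto \log|\tilde{p}(h(w))|$ is \emph{harmonic} with value $0$ at the origin, forcing near-perfect cancellation in $\prod_j|\tilde{p}(h(e^{2\pi i j/m}z))|$; the $\log^5 n$ arises from the boundary intervals excluded and from a derivative estimate, not from ``dyadic scales''.

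Your pointwise plan cannot be made to work as stated. On $\widetilde{E}_{n^{-2/3}}$ one has $|z|\approx 1-n^{-2/3}$, so $|z|^{n^{1/3}}\approx 1-O(n^{-1/3})$ and hence the tail $|z^{n^{1/3}}g(z)|$ is genuinely of size $\asymp n^{2/3}$ at every point of the ellipse --- there is no single $x_0$ at which the ``small'' in your inequality $|p(x_0)|\gtrsim d\,n^{-2/3}-(\text{small})$ is actually small. A damping multiplier $Q$ cannot fix this: to kill a factor of $n^{2/3}$ uniformly on the ellipse while remaining $\ge \exp(-O(n^{1/3}\log^5 n))$ at one point of that same ellipse is exactly the kind of extremal problem whose resolution requires the averaging/harmonicity mechanism above, not a direct construction. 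In short, the missing idea is the product-over-rotations combined with the harmonic cancellation of Lemma~\ref{product}; without it the sparsity of $p$ never gets converted into the exponent $n^{1/3}$.
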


\vspace{1mm}

While \cite{littlewood} certainly uses that $\wt{E}_a$ is an ellipse, all we will use is about $\wt{E}_a$ (besides using Lemma \ref{hadamard} as a black box) is that the interior of $\wt{E}_a$, denoted $\wt{E}_a^\circ$, contains a ball of radius $\frac{a}{10^{10}}$ centered at $1-a$. We begin with two lemmas. 

\vspace{2mm}

In the proof of Theorem 5.1 of \cite{littlewood}, the authors use the function $h(z) = (1-a)\frac{z+z^2}{2}$ for a maximum modulus principle argument to lower bound the quantity $\left(\max_{z \in \wt{E}_a} |p(z)|\right)^2$. For $z = e^{2\pi i t}$ for small $t$, the magnitude $|h(e^{2\pi i t})|$ is quadratically in $t$ less than $1$. For our purposes, we need a linear deviation of $|h(e^{2\pi i t})|$ from $1$. This motivates the following lemma.

\vspace{1mm}

\begin{lemma}\label{hproperties}
There are absolute constants $c_4,c_5,C_6 > 0$ such that the following holds for $a > 0$ small enough. Let $\tilde{h}(z) = \sum_{j=1}^r d_jz^j$ for $$d_j := \frac{\lambda_a}{j^2\log^2(j+3)}$$ and $r := a^{-1}$, where $\lambda_a \in (1,2)$ is such that $\sum_{j=1}^r d_j = 1$. Let $h(z) = (1-a)\tilde{h}(z)$. Then $h(0) = 0$, $|h(e^{2\pi it})| \le 1-a$ for each $t$, $h(e^{2\pi it}) \in \widetilde{E}_a^\circ$ for $t \in [-c_4a,c_4a]$, and $$|h(e^{2\pi it})| \le 1-c_5\frac{|t|}{\log^2 (a^{-1})}$$ for $t \in [-\frac{1}{2},\frac{1}{2}]\setminus [-C_6a,C_6a]$. 
\end{lemma}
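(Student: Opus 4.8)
\textbf{Proof proposal for Lemma \ref{hproperties}.}

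The plan is to study the function $\tilde h(z)=\sum_{j=1}^r d_j z^j$, where $d_j=\lambda_a/(j^2\log^2(j+3))$ and $\lambda_a$ is the normalizing constant making $\sum_j d_j=1$; the required properties of $h=(1-a)\tilde h$ all reduce to estimates on $\tilde h$ on the unit circle. First I would record that $h(0)=0$ is immediate (no constant term), and that $|h(e^{2\pi i t})|\le (1-a)\sum_j d_j=1-a$ by the triangle inequality, which already gives the uniform bound. So the real content is the two-sided local behaviour near $t=0$: for $|t|$ extremely small (of order $a$) the point $h(e^{2\pi i t})$ must sit inside $\widetilde E_a^\circ$, while for $|t|$ in the complementary range $[-\tfrac12,\tfrac12]\setminus[-C_6a,C_6a]$ we need the \emph{linear-in-$|t|$} gain $|h(e^{2\pi i t})|\le 1-c_5|t|/\log^2(a^{-1})$.

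For the containment in $\widetilde E_a^\circ$: since $h$ is continuous with $h(1)=1-a$ and $h(0)=0$, and since $\widetilde E_a^\circ$ contains a ball of radius $a/10^{10}$ about $1-a$ (the only geometric fact we are permitted to use about the ellipse), it suffices to show $|h(e^{2\pi i t})-(1-a)|\le a/10^{10}$ for $|t|\le c_4 a$ with $c_4$ small enough. Now $h(e^{2\pi i t})-(1-a)=(1-a)\sum_j d_j(e^{2\pi i j t}-1)$, and $|e^{2\pi i jt}-1|\le 2\pi j|t|$, so this difference is bounded by $2\pi|t|\sum_j j d_j=2\pi|t|\lambda_a\sum_{j=1}^r 1/(j\log^2(j+3))$. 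The key observation is that $\sum_{j\ge1}1/(j\log^2(j+3))$ \emph{converges} to an absolute constant $K$ (this is exactly why the $\log^2$ factor was inserted into $d_j$), so the difference is at most $4\pi K|t|$, which is $\le a/10^{10}$ once $|t|\le c_4 a$ with $c_4:=1/(10^{12}\pi K)$, say. The same convergent sum also confirms $\lambda_a\in(1,2)$ for small $a$: $\sum_{j=1}^r 1/(j^2\log^2(j+3))$ is an increasing partial sum of a convergent series whose total is strictly between $\tfrac12$ and $1$ (one checks the $j=1$ term alone exceeds $\tfrac12\cdot\frac1{\log^2 4}$... more carefully, one just notes the full sum lies in $(\tfrac12,1)$ by direct estimation of the first few terms plus a tail bound), hence its reciprocal $\lambda_a$ lies in $(1,2)$ for $a$ small, i.e. $r=a^{-1}$ large.

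The main obstacle is the linear lower bound on the deviation $1-|h(e^{2\pi i t})|$ for $t$ not too close to $0$. Write $S(t):=\sum_{j=1}^r d_j e^{2\pi i j t}$, so $|h(e^{2\pi i t})|=(1-a)|S(t)|$ and it suffices to prove $1-|S(t)|\ge c_5|t|/\log^2(a^{-1})$ (absorbing the factor $1-a$). Since $|S(t)|\le 1$, we have $1-|S(t)|\ge 1-|S(t)|^2\ge \tfrac12(1-|S(t)|^2)$... actually it is cleaner to bound $1-|S(t)|\ge \tfrac12(1-|S(t)|^2)$ and compute $1-|S(t)|^2$. Alternatively, and I think more robustly, estimate $\Real(1-S(t))=\sum_j d_j(1-\cos(2\pi j t))=\sum_j d_j\cdot 2\sin^2(\pi j t)$ and use $1-|S(t)|\ge \Real(1-S(t))$ when $\Real S(t)\le |S(t)|\le 1$ --- wait, that inequality goes the wrong way; instead use $1-|S(t)|\ge 1-|\Real S(t)| \ge 1-\Real S(t)$ is false too. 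The correct route: $|S(t)|\le \Real S(t) + |\Imag S(t)|$ is useless; rather, $|S(t)|^2=(\Real S)^2+(\Imag S)^2\le (\Real S)^2 + (\text{something small})$, or simply bound $1-|S(t)|\ge \tfrac{1}{2}(1-|S(t)|^2)=\tfrac12(1-(\Real S)^2-(\Imag S)^2)\ge \tfrac12(1-\Real S(t))$ using $(\Real S)^2\le \Real S$ since $0\le \Real S\le 1$, and discarding $(\Imag S)^2\ge 0$ --- giving $1-|S(t)|\ge \tfrac12(1-\Real S(t))=\sum_j d_j\sin^2(\pi j t)$. So the task is to lower bound $\sum_{j=1}^r d_j\sin^2(\pi j t)$ by $c_5|t|/\log^2(a^{-1})$ for $|t|\in[C_6 a,\tfrac12]$. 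The idea is to restrict the sum to the dyadic-type range of indices $j$ with $j|t|\in[\tfrac14,\tfrac34]\pmod 1$, equivalently $j$ near $1/(4|t|)$: for such $j$, $\sin^2(\pi j t)\ge c$, and among the integers in an interval of length $\asymp 1/|t|$ a positive proportion (at least a constant fraction) satisfy this; since $|t|\ge C_6 a$ we have $1/(4|t|)\le r/(4C_6)$, so choosing $C_6$ appropriately these indices $j$ do lie in $\{1,\dots,r\}$, and for each such $j$ we have $d_j\ge c\lambda_a/(j^2\log^2(j+3))\ge c/( (1/|t|)^2\log^2(a^{-1}))$ because $j\asymp 1/|t|\le 1/(C_6 a)$ forces $\log(j+3)=O(\log(a^{-1}))$. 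Summing over the $\asymp 1/|t|$ admissible indices gives $\sum_j d_j\sin^2(\pi j t)\ge \asymp (1/|t|)\cdot |t|^2/\log^2(a^{-1}) = |t|/\log^2(a^{-1})$ up to absolute constants, as desired. The delicate points to get right are (i) the "positive proportion of integers in an interval of length $1/|t|$ land in a fixed residue window" counting, which needs $1/|t|$ bounded below by an absolute constant --- handled separately for $|t|$ bounded away from $0$ by a crude argument and for small $|t|$ by the window count; and (ii) ensuring the chosen indices do not exceed $r=a^{-1}$, which is exactly where the hypothesis $|t|\ge C_6 a$ (with $C_6$ a suitable absolute constant) is used, and where one also has to be slightly careful when $|t|$ is close to $\tfrac12$ so that $1/(4|t|)$ could be as small as $\tfrac12$ --- then one instead picks $j$ with $j|t|$ near $\tfrac12$, i.e. $j$ near $1/(2|t|)$, or simply notes that for $|t|$ in a compact range bounded away from $0$ one can exhibit a single fixed index (e.g. $j=1$ or $j=2$) with $\sin^2(\pi j t)\ge c$ and $d_j\ge c/\log^2(a^{-1})$, again yielding the bound since $|t|=O(1)$ there. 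Assembling these cases gives the stated inequality with absolute constants $c_5,C_6$, completing the proof.
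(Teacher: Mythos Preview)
Your handling of $h(0)=0$, $|h|\le 1-a$, and the containment $h(e^{2\pi it})\in\widetilde E_a^\circ$ for $|t|\le c_4 a$ is correct and matches the paper's argument essentially verbatim.

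The argument for the linear bound has a genuine error. You claim
\[
\tfrac12\bigl(1-(\Real S)^2-(\Imag S)^2\bigr)\ \ge\ \tfrac12(1-\Real S),
\]
justified by ``$(\Real S)^2\le\Real S$ since $0\le\Real S\le 1$, and discarding $(\Imag S)^2\ge 0$.'' But this inequality is equivalent to $(\Real S)^2+(\Imag S)^2\le\Real S$, and dropping the nonnegative term $(\Imag S)^2$ from the \emph{left} side goes the wrong way; for instance $S=i/2$ gives $|S|^2=1/4>0=\Real S$. Moreover the premise $\Real S\ge 0$ itself fails: at $t=\tfrac12$ one has $\Real S(\tfrac12)=\sum_j d_j(-1)^j<0$, since the $j=1$ term dominates. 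So as written the chain from $1-|S|$ down to $\sum_j d_j\sin^2(\pi jt)$ is broken.

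Your lower bound $\sum_{j=1}^r d_j\sin^2(\pi jt)\gtrsim|t|/\log^2(a^{-1})$ via the window $j\asymp 1/|t|$ is itself sound, and the approach is salvageable: instead of passing through $1-\Real S$, expand $1-|S|^2=2\sum_{j,k}d_jd_k\sin^2(\pi(j-k)t)$ directly and keep only the terms with $k=1$; since $d_1$ is an absolute positive constant this reduces to the same single sum (shifted by one) and your counting goes through unchanged to give $1-|S|\ge\tfrac12(1-|S|^2)\gtrsim|t|/\log^2(a^{-1})$. The paper takes a different route altogether: it applies summation by parts to $\sum_j d_j z^j$, invokes the geometric-series bound $\bigl|\sum_{j\le x}e^{2\pi ijt}\bigr|\le t^{-1}$, and compares against the identity obtained at $z=1$ to upper-bound $|\tilde h(e^{2\pi it})|$ directly, never separating real and imaginary parts.
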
 

\begin{proof}
Clearly $h(0) = 0$ and $|h(e^{2\pi it})| \le 1-a$ for each $t$. Now, for any $t \in \R$, $$|\tilde{h}(e^{2\pi i t})-1| = \left|\sum_{j=1}^r d_j(e^{2\pi i tj}-1)\right| \le \sum_{j=1}^r d_j2\pi tj = 2\pi t \sum_{j=1}^r \frac{\lambda_a}{j\log^2(j+3)} \le C_4t$$ for $C_4$ absolute. Thus, $$|h(e^{2\pi i t})-(1-a)| = (1-a)|\tilde{h}(e^{2\pi i t})-1| \le C_4t.$$ If $|t| \le c_4 a$ for $c_4 > 0$ sufficiently small, we conclude $h(e^{2\pi i t}) \in \wt{E}_a^\circ$.     

\vspace{2mm}

\noindent We now go on to showing the last inequality in the statement of Lemma \ref{hproperties}. 

\vspace{2mm}

\noindent By summation by parts, for any $z \in \mathbb{C}$, we have 
\begin{equation}\label{sumbyparts} 
\sum_{j=1}^r \frac{\lambda_az^j}{j^2\log^2(j+3)} = \frac{\lambda_a\sum_{j=1}^r z^j}{r^2\log^2(r+3)}+2\lambda_a\int_1^r \frac{(\sum_{j \le x} z^j)\left(\log(x+3)+\frac{x}{x+3}\right)}{x^3\log^3(x+3)}dx.
\end{equation} 
Quickly note that, for $z = 1$, \eqref{sumbyparts} gives 
\begin{equation}\label{sumbyparts1}
1 = \frac{\lambda_a}{r\log^2(r+3)}+2\lambda_a\int_1^r \frac{\lfloor x\rfloor\left(\log(x+3)+\frac{x}{x+3}\right)}{x^3\log^3(x+3)}dx.
\end{equation}
Trivially, for any $z \in \partial \mathbb{D}$, we have 
\begin{equation}\label{sumbypartsfirstterm}
\left|\frac{\lambda_a\sum_{j=1}^r z^j}{r^2\log^2(r+3)}\right| \le \frac{\lambda_a}{r\log^2(r+3)}.
\end{equation}
Note that, for any $x \ge 1$,
\begin{equation}\label{geomseriesbound}
\left|\sum_{j \le x} z^j\right| = \left|z\frac{1-z^{\lfloor x \rfloor}}{1-z}\right| \le \frac{2}{|1-z|} \le t^{-1}
\end{equation}
for all $z = e^{2\pi it}$ with $t \in (0,\frac{1}{2}]$. Take $C_6 > 3$ to be chosen later. Note $t \in (C_6a,\frac{1}{2}]$ implies $3t^{-1} < r$. For $z = e^{2\pi it}$ with $C_6a < t \le \frac{1}{2}$, \eqref{geomseriesbound} and \eqref{sumbyparts1} imply $$\left|2\lambda_a\int_1^r \frac{(\sum_{j \le x} z^j)\left(\log(x+3)+\frac{x}{x+3}\right)}{x^3\log^3(x+3)}dx \right| \le $$ $$2\lambda_a \int_1^{3t^{-1}} \frac{\lfloor x \rfloor\left(\log(x+3)+\frac{x}{x+3}\right)}{x^3\log^3(x+3)}dx+ 2\lambda_a\int_{3t^{-1}}^r \frac{t^{-1}\left(\log(x+3)+\frac{x}{x+3}\right)}{x^3\log^3(x+3)}dx$$
\begin{equation}\label{sumbypartsupperbound} 
= 1-2\lambda_a\int_{3t^{-1}}^r \frac{\left(\lfloor x \rfloor - t^{-1}\right)\cdot\left(\log(x+3)+\frac{x}{x+3}\right)}{x^3\log^3(x+3)}dx-\frac{\lambda_a}{r\log^2(r+3)}.
\end{equation} 
Observe $\lfloor x \rfloor -  t^{-1} \ge \frac{1}{2}x$ for $x \ge 3t^{-1}$. Therefore, \begin{align*}2\lambda_a\int_{3t^{-1}}^r \frac{\left(\lfloor x \rfloor - t^{-1}\right)\cdot\left(\log(x+3)+\frac{x}{x+3}\right)}{x^3\log^3(x+3)}dx &\ge \lambda_a\int_{3t^{-1}}^r \frac{1}{x^2\log^2(x+3)}dx \\ &\ge \frac{\lambda_a}{\log^2(r+3)}\int_{3t^{-1}}^r \frac{1}{x^2}dx\end{align*} 
\begin{equation}\label{sumbypartslowerbound} 
\hspace{81.8mm} = \frac{\lambda_at}{3\log^2(r+3)}-\frac{\lambda_a}{r\log^2(r+3)}.
\end{equation}
Combining \eqref{sumbyparts}, \eqref{sumbypartsfirstterm}, \eqref{sumbypartsupperbound}, and \eqref{sumbypartslowerbound}, we conclude that, for any $t \in (C_6a,\frac{1}{2}]$,
\begin{equation}\label{hboundlarget}
\left|\tilde{h}(e^{2\pi it})\right| = \left|\sum_{j=1}^r \frac{\lambda_ae^{2\pi i jt}}{j^2\log^2(j+3)}\right| \le 1-\frac{\lambda_at}{3\log^2(r+3)}+\frac{\lambda_a}{r\log^2(r+3)}.
\end{equation}
Taking $C_6$ to be much larger than $3$, \eqref{hboundlarget} gives the bound $$|\tilde{h}(e^{2\pi it})| \le 1-c_5\frac{t}{\log^2(a^{-1})}$$ for $t \in (C_6a,\frac{1}{2}]$, for suitable $c_5 > 0$. By symmetry, the proof is complete. 
\end{proof}

\vspace{2mm}

We from now on fix some $n \ge 1$ and some $p \in \mc{P}_n$ (defined at the beginning of the section). Let $\tilde{p}$ be the truncation of $p$ to terms of degree less than $n^{1/3}$; either $\tilde{p} = 1$ or $\tilde{p} = 1-x^d$ for some $1 \le d < n^{1/3}$. Take $a = n^{-2/3}$, and let $h$ be as in Lemma \ref{hproperties}. Let $m = c_4^{-1}n^{2/3}$. Let $J_1 =  c_5^{-1}n^{-1/3}m\log^4 n$ and $J_2 = m-J_1$. 

\vspace{2mm}

In the proof below of Proposition \ref{mmp}, we will need to upper bound the product $\prod_{j=J_1}^{J_2-1} |\tilde{p}(h(e^{2\pi i \frac{j}{m}}))|$ by $\exp(\wt{O}(n^{1/3}))$. We must be careful in doing so, as the trivial upper bound on each term is $2$ and there are approximately $n^{2/3}$ terms. However, we expect the argument of $h(e^{2\pi i \frac{j}{m}})$ to behave as if it were random, and thus we expect $|\tilde{p}(h(e^{2\pi i \frac{j}{m}}))|$ to sometimes be smaller than $1$. The fact that the cancellation between terms smaller than $1$ and terms greater than $1$ is nearly perfect comes from the fact that $\log \left|\tilde{p}(h(w))\right|$ is harmonic, which we make crucial use of below. 

\vspace{2mm}

\begin{lemma}\label{product}
For any $t \in [0,1]$, we have $|\tilde{p}(h(e^{2\pi i t}))| \ge \frac{1}{2}n^{-2/3}$. For any $\delta \in [0,1)$, we have $\prod_{j=J_1}^{J_2-1} |\tilde{p}(h(e^{2\pi i \frac{j+\delta}{m}}))| \le \exp(Cn^{1/3}\log^5 n)$ for some absolute $C > 0$.
\end{lemma}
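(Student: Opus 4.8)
The statement has two parts: a pointwise lower bound on $|\tilde p(h(e^{2\pi i t}))|$, and an upper bound on the product over the ``middle'' range $J_1 \le j < J_2$. For the first part, recall $\tilde p$ is either $1$ or $1 - x^d$ with $1 \le d < n^{1/3}$. If $\tilde p = 1$ there is nothing to do, so assume $\tilde p(x) = 1 - x^d$. Then $|\tilde p(h(w))| = |1 - h(w)^d| \ge 1 - |h(w)|^d \ge 1 - |h(w)| \ge a = n^{-2/3}$ whenever $|h(w)| \le 1$; since Lemma \ref{hproperties} gives $|h(e^{2\pi i t})| \le 1-a < 1$ for all $t$, this already yields the bound $|\tilde p(h(e^{2\pi i t}))| \ge n^{-2/3} \ge \frac12 n^{-2/3}$. (One should double-check whether a sharper bound is needed near $t=0$, where $h$ is close to $1-a$, but $1-|h| \ge a$ is uniform and suffices.)

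\textbf{The product bound, harmonic-function approach.} For the second part, the idea flagged in the paragraph before the lemma is that $\log|\tilde p(h(w))|$ is harmonic in $w$ on a domain where $\tilde p(h(w)) \neq 0$, so the sum $\sum_{j=J_1}^{J_2-1} \log|\tilde p(h(e^{2\pi i (j+\delta)/m}))|$ is a Riemann sum approximating $\frac{m}{2\pi}\int \log|\tilde p(h(e^{i\theta}))|\, d\theta$ over the corresponding arc, and the full integral over the whole circle is controlled by the mean value property / Jensen's formula: $\frac{1}{2\pi}\int_0^{2\pi} \log|\tilde p(h(e^{i\theta}))|\, d\theta = \log|\tilde p(h(0))| = \log|\tilde p(0)| = \log 1 = 0$ (using $h(0)=0$ and that $\tilde p \circ h$ is analytic and nonvanishing on a neighbourhood of the closed disk, since $|h| \le 1-a$ keeps us away from the zeros of $\tilde p$, which lie on $|x|=1$). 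So the \emph{whole-circle} sum is $\approx 0$; the middle-range sum differs from it by (i) the contribution of the two end arcs $j < J_1$ and $j \ge J_2$, each of length $O(J_1/m) = O(c_5^{-1} n^{-1/3}\log^4 n)$ in angle, on which $\log|\tilde p(h)|$ is bounded in absolute value by $O(\log n)$ (upper bound $\log 2$, lower bound from part one), contributing at most $O(J_1 \log n) = O(n^{1/3}\log^5 n)$ to the sum; and (ii) the Riemann-sum discretization error, which I would bound by the total variation of $\theta \mapsto \log|\tilde p(h(e^{i\theta}))|$ times the mesh $1/m$, or more robustly by controlling $\frac{d}{d\theta}\log|\tilde p(h(e^{i\theta}))|$ away from the zeros.

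\textbf{Main obstacle.} The delicate point is the discretization error near $\theta = 0$, where $\tilde p(h(e^{i\theta})) = 1 - h(e^{i\theta})^d$ can be as small as $n^{-2/3}$, so $\log|\tilde p(h)|$ is as negative as $-\frac23\log n$ and its derivative in $\theta$ can be very large (of order $1/|1-h^d| \sim n^{2/3}$ times the derivative of $h$). A crude Riemann-sum bound there would be fatal. But this is precisely why the product is taken only over $J_1 \le j < J_2$: the points $e^{2\pi i (j+\delta)/m}$ with $j \ge J_1$ correspond to $t = (j+\delta)/m \ge J_1/m = c_5^{-1} n^{-1/3}\log^4 n$, which is $\gg C_6 a = C_6 n^{-2/3}$, so by Lemma \ref{hproperties} we have $|h(e^{2\pi i t})| \le 1 - c_5 t/\log^2(a^{-1})$, i.e. $1 - |h| \gtrsim n^{-1/3}\log^4 n / \log^2 n = n^{-1/3}\log^2 n$ on the whole middle range, keeping us a definite distance from the singularity and hence keeping $|\log|\tilde p(h)||$ and the relevant derivative polynomially controlled there. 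So the strategy is: (1) bound the middle sum below by $0$ minus the end-arc contributions minus the discretization error on the middle, using the improved lower bound on $1-|h|$ to control both; (2) turn the resulting $\Omega(-n^{1/3}\log^5 n)$ lower bound on the sum into the claimed upper bound $\exp(Cn^{1/3}\log^5 n)$ on the product by exponentiating. I expect the technically fussiest step to be making the Riemann-sum-vs-integral comparison rigorous with the right power of $\log n$, which I would handle by estimating $\sup_\theta \bigl|\frac{d}{d\theta}\log|\tilde p(h(e^{i\theta}))|\bigr|$ separately on the middle arc (where it is $\mathrm{poly}(n)$, times mesh $1/m \sim n^{-2/3}$, over $\sim n^{2/3}$ points — needs a small amount of care but comes out to $\wt O(n^{1/3})$) rather than via total variation.
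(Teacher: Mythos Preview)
Your structural plan is correct and matches the paper: the pointwise lower bound (Part 1) is fine (your $1-|h|^d \ge 1-|h| \ge a$ is in fact slightly cleaner than the paper's $1-(1-a)^d \ge \tfrac12 ad$), and for Part 2 the harmonic-function identity $\int_0^1 \log|\tilde p(h(e^{2\pi i t}))|\,dt = 0$, plus the end-arc bound $O(J_1)\cdot O(\log n) = O(n^{1/3}\log^5 n)$, plus a Riemann-sum comparison, is exactly the route the paper takes.

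The genuine gap is your treatment of the discretization error. You propose to control it by the \emph{uniform} $\sup_t |g'(t)|$ on the middle arc $t\in[J_1/m,J_2/m]$, where $g(t)=2\log|\tilde p(h(e^{2\pi i t}))|$. That does not work. One has $|g'(t)| \le \dfrac{\partial_t |\tilde p(h)|^2}{|\tilde p(h)|^2}$ with numerator $O(d)$ (this uses $\sum_j j\,d_j = O(1)$, which is why the $\log^{-2}$ weight in $d_j$ is there), but the denominator at the edge $t=J_1/m$ is only $\bigl(1-|h|^d\bigr)^2 \ge \bigl(1-|h|\bigr)^2 \sim (n^{-1/3}\log^2 n)^2$ when $d=1$. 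So for $d=1$ the sup of $|g'|$ on the middle arc is $\sim n^{2/3}/\log^4 n$, and the resulting error bound on $\sum_{j} g(j/m)$ is $\sim n^{2/3}$, not $\wt O(n^{1/3})$.

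What is actually needed is the approach you mention and then set aside: sum the \emph{local} maxima $\frac{1}{m}\sum_j \max_{[j/m,(j+1)/m]} |g'|$ (equivalently, a total-variation bound). Using $|\tilde p(h(e^{2\pi i t}))| \ge 1-(1-c_5 t/\log^2 n)^d$, one splits at $J_* = c_5^{-1} d^{-1} m\log^2 n$: for $j\le J_*$ one has $1-(1-c_5 t/\log^2 n)^d \ge \tfrac12 c_5 td/\log^2 n$, giving $|g'|\lesssim \log^4 n/(d t^2)$ and $\frac{1}{m}\sum_{J_1}^{J_*} \lesssim \frac{m\log^4 n}{d J_1} \lesssim n^{1/3}$; for $j>J_*$ the denominator is bounded below by a constant and $\frac{1}{m}\sum_{J_*}^{m/2} O(d) \le O(d)\le O(n^{1/3})$. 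This decay-and-split is the core of the paper's argument and is not captured by a single sup.
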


\begin{proof}
Clearly both inequalities hold if $\tilde{p} = 1$, so suppose $\tilde{p}(x) = 1-x^d$ for some $1 \le d < n^{1/3}$. For the first inequality, we use $$|\tilde{p}(h(e^{2\pi i t}))| = |1-h(e^{2\pi i t})^d| \ge 1-|h(e^{2\pi i t})|^d \ge 1-(1-a)^d \ge \frac{1}{2}ad \ge \frac{1}{2}n^{-2/3}.$$ We now move on to the second inequality. Define $g(t) = 2\log|\tilde{p}(h(e^{2\pi i (t+\frac{\delta}{m})}))|$. For notational ease, we assume $\delta = 0$; the argument about to come works for all $\delta \in [0,1)$. The first inequality implies $g$ is $C^1$, so by the mean value theorem, \begin{align}\label{mvt}\left|\frac{1}{m}\sum_{j=J_1}^{J_2-1} g\left(\frac{j}{m}\right)-\int_{J_1/m}^{J_2/m} g(t)dt\right| &= \left|\sum_{j=J_1}^{J_2-1} \int_{j/m}^{(j+1)/m} \left(g(t)-g\left(\frac{j}{m}\right)\right)dt\right| \nonumber\\ &\le \sum_{j=J_1}^{J_2-1} \int_{j/m}^{(j+1)/m} \left(\max_{\frac{j}{m} \le y \le \frac{j+1}{m}} |g'(y)|\right)\frac{1}{m} dt \nonumber\\ &\le \frac{1}{m^2}\sum_{j=J_1}^{J_2-1}\max_{\frac{j}{m} \le y \le \frac{j+1}{m}} |g'(y)|. \end{align} Since $w \mapsto \log|\tilde{p}(h(w))|$ is harmonic and $\log|\tilde{p}(h(0))| = \log|\tilde{p}(0)| = 0$, we have $$\int_0^1 g(t)dt = 2\int_0^1 \log |\tilde{p}(h(e^{2\pi i t}))|dt = 0,$$ and therefore
\begin{equation} \label{harmonicapplication}
\left|\int_{J_1/m}^{J_2/m} g(t)dt\right| \le \left|\int_{0}^{J_1/m}g(t)dt\right|+\left|\int_{J_2/m}^1 g(t)dt\right|.
\end{equation}
Since $$\frac{1}{2}n^{-2/3} \le \left|\tilde{p}(h(e^{2\pi i t}))\right| \le 1$$ for each $t$, we have
\begin{equation}\label{integralbounds}
\left|\int_{0}^{J_1/m}g(t)dt\right|+\left|\int_{J_2/m}^1 g(t)dt\right| \le 2\left(\frac{J_1}{m}+(1-\frac{J_2}{m})\right)\log n \le C\frac{\log^5 n}{n^{1/3}}.
\end{equation}
By \eqref{mvt}, \eqref{harmonicapplication}, and \eqref{integralbounds}, we have $$\left|\frac{1}{m}\sum_{j=J_1}^{J_2-1} g(\frac{j}{m})\right| \le C\frac{\log^5 n}{n^{1/3}}+\frac{1}{m^2}\sum_{j=J_1}^{J_2-1} \max_{\frac{j}{m} \le t \le \frac{j+1}{m}} |g'(t)|.$$
Multiplying through by $m$, changing $C$ slightly, and exponentiating, we obtain 
\begin{equation}\label{partialproductbound}
\prod_{j=J_1}^{J_2-1} \left|\tilde{p}(h(e^{2\pi i \frac{j}{m}}))\right|^2 \le \exp\left(Cn^{1/3}\log^5 n + \frac{1}{m}\sum_{j=J_1}^{J_2-1} \max_{\frac{j}{m} \le t \le \frac{j+1}{m}} |g'(t)|\right).
\end{equation}
Note $$g'(t_0) = \frac{\frac{\partial}{\partial t}\Big[|\tilde{p}(h(e^{2\pi i t}))|^2\Big] \Big|_{t=t_0}}{|\tilde{p}(h(e^{2\pi i t_0}))|^2}.$$ 

\noindent We first show $$\frac{\partial}{\partial t}\Big[|\tilde{p}(h(e^{2\pi i t}))|^2\Big] \Big|_{t=t_0} \le 100d$$ for each $t_0 \in [0,1]$. We start by noting $$\Big|\tilde{p}(h(e^{2\pi i t}))\Big|^2 = 1+(1-a)^{2d}\left(\left|\sum_{j=1}^r d_j e^{2\pi i tj}\right|^2\right)^d-2\Real\left[\left((1-a)\sum_{j=1}^r d_j e^{2\pi i tj}\right)^d\right].$$ Let $$f_1(t) = (1-a)^{2d}\left(\left|\sum_{j=1}^r d_j e^{2\pi i tj}\right|^2\right)^d.$$ Then, \begin{align*}f_1'(t) &= (1-a)^{2d}d\left(\left|\sum_{j=1}^r d_j e^{2\pi i tj}\right|^2\right)^{d-1}\frac{\partial}{\partial t}\left[\left|\sum_{j=1}^r d_j e^{2\pi i tj}\right|^2\right] \\ &= (1-a)^{2d}d\left(\left|\sum_{j=1}^r d_j e^{2\pi i tj}\right|^2\right)^{d-1}\sum_{1 \le j_1,j_2 \le r} d_{j_1}d_{j_2}2\pi i(j_1-j_2)e^{2\pi i (j_1-j_2)t}.\end{align*} Since $\sum_{j=1}^r d_j = 1$, we therefore have \begin{align*}|f_1'(t)| &\le 2\pi d\sum_{1 \le j_1,j_2 \le r} \lambda_a^2\frac{j_1+j_2}{j_1^2j_2^2\log^2(j_1+3)\log^2(j_2+3)} \\ &= 4\pi d\left(\sum_{j_1=1}^r \frac{\lambda_a}{j_1\log^2(j_1+3)}\right)\left(\sum_{j_2=1}^r \frac{\lambda_a}{j_2^2\log^2(j_2+3)}\right) \\ &\le 50d. \end{align*} Now, let $$f_2(t) = -2\Real\left[\left((1-a)\sum_{j=1}^r d_j e^{2\pi i tj}\right)^d\right]$$ and note \begin{align*} f_2'(t) &= \frac{\partial}{\partial t}\left[-2(1-a)^d\sum_{1 \le j_1,\dots,j_d \le r} d_{j_1}\dots d_{j_d} \cos(2\pi t(j_1+\dots+j_d))\right] \\ &= 4\pi (1-a)^d \sum_{1 \le j_1,\dots,j_d \le r} d_{j_1}\dots d_{j_d}(j_1+\dots+j_d)\sin(2\pi t(j_1+\dots+j_d)),\end{align*} yielding \begin{align*} |f_2'(t)| &\le 4\pi \sum_{1 \le j_1,\dots,j_d \le r}\lambda_a^d \frac{j_1+\dots+j_d}{j_1^2\dots j_d^2 \log^2(j_1+3)\dots \log^2(j_d+3)} \\ &= 4\pi d\left(\sum_{j_1=1}^r \frac{\lambda_a}{j_1\log^2(j_1+3)}\right)\left(\sum_{j=1}^r \frac{\lambda_a}{j^2\log^2(j+3)}\right)^{d-1} \\ &\le 50d. \end{align*} We have thus shown $$\frac{\partial}{\partial t}\Big[|\tilde{p}(h(e^{2\pi i t}))|^2\Big] \Big|_{t=t_0} \le 100d$$ for each $t_0 \in [0,1]$. 

\vs

\noindent Recall $$|\tilde{p}(h(e^{2\pi i t}))| = |1-h(e^{2\pi i t})^d| \ge 1-|h(e^{2\pi i t})|^d.$$ 

\vspace{2.5mm}

\noindent For $j \in [J_1,J_2] \sub [C_6am,(1-C_6a)m]$, we use $$|h(e^{2\pi i \frac{j}{m}})| \le 1-c_5\frac{\min(\frac{j}{m},1-\frac{j}{m})}{\log^2 n}$$ to obtain $$\frac{1}{m}\sum_{j=J_1}^{J_2-1}\max_{\frac{j}{m} \le t \le \frac{j+1}{m}} |g'(t)| \le \frac{1}{m}\sum_{j=J_1}^{J_2-1} \frac{100d}{\left(1-(1-c_5\frac{\min(\frac{j}{m},1-\frac{j}{m})}{\log^2 n})\strut^d\right)^2}.$$ Up to a factor of $2$, we may deal only with $j \in [J_1,\frac{m}{2}]$. Let $J_* = c_5^{-1}d^{-1}m\log^2 n$. Note that $j \le J_*$ implies $c_5\frac{j}{m\log^2 n} \le d^{-1}$ and $j \ge J_*$ implies $c_5\frac{j}{m\log^2n} \ge d^{-1}$. Thus, using $(1-x)^d \le 1-\frac{1}{2}xd$ for $x \le \frac{1}{d}$, we have \begin{align}\label{range1}\frac{1}{m}\sum_{j=J_1}^{\min(J_*,\frac{m}{2})} \frac{100d}{\left(1-(1-c_5\frac{j}{m\log^2 n})\strut^d\right)^2} &\le \frac{100d}{m}\sum_{j=J_1}^{\min(J_*,\frac{m}{2})} \frac{1}{\left(\frac{1}{2}c_5\frac{j}{m\log^2 n}d\right)^2} \nonumber \\ &= \frac{400m\log^4 n}{c_5^2 d}\sum_{j=J_1}^{\min(J_*,\frac{m}{2})} \frac{1}{j^2} \nonumber \\ &\le \frac{400m\log^4n}{c_5^2d}\frac{2}{J_1} \nonumber \\ &\le Cn^{1/3}. \end{align} Finally, since there is some $c > 0$ such that $(1-x)^l \le 1-c$ for all $l \in \N$ and $x \in [l^{-1},1]$, using the notation $\sum_{i=a}^b x_i = 0$ if $a > b$, we see \begin{align}\label{range2} \frac{1}{m}\sum_{j = \min(J_*,\frac{m}{2})+1}^{m/2} \frac{100d}{\left(1-(1-c_5\frac{j}{m\log^2 n})\strut^d\right)^2} &\le \frac{100d}{m}\sum_{j=\min(J_*,\frac{m}{2})+1}^{m/2} c^{-2} \nonumber \\ &\le Cd \nonumber \\ &\le Cn^{1/3}. \end{align} Combining \eqref{range1} and \eqref{range2}, we obtain $$\frac{1}{m}\sum_{j=J_1}^{J_2-1} \max_{\frac{j}{m} \le \frac{j+1}{m}} |g'(t)| \le Cn^{1/3}.$$ Plugging this upper bound into \eqref{partialproductbound} yields the desired result.
\end{proof}

\vspace{1mm}

\begin{proof}[Proof of Proposition \ref{mmp}]
Define $g(z) = \prod_{j=0}^{m-1} p(h(e^{2\pi i\frac{j}{m}}z))$. Fix $z \in \partial \mathbb{D}$; say $z = e^{2\pi i(\frac{j_0}{m}+\delta)}$ for some $j_0 \in \{0,\dots,m-1\}$ and $\delta \in [0,\frac{1}{m})$. For ease of notation, we assume $j_0 = 0$; the argument about to come is to any $j_0$. Then, $e^{2\pi i\frac{j}{m}}z$ is in $\{e^{2\pi it} : -c_4a \le t < c_4a\}$ if $j \in \{0,m-1\}$. Therefore, Lemma \ref{product} followed by the maximum modulus principle ($p$ is analytic) imply
\begin{align}\label{mmpeqn}
|g(z)| &\le \left(\max_{w \in \widetilde{E}_a^\circ} |p(w)|\right)^2\prod_{j \not \in \{0,m-1\}} |p(h(e^{2\pi i\frac{j}{m}}z))| \nonumber \\ &\le \left(\max_{w \in \widetilde{E}_a} |p(w)|\right)^2\prod_{j \not \in \{0,m-1\}} |p(h(e^{2\pi i\frac{j}{m}}z))|.
\end{align}
Let $I = [J_1,J_2-1]\cap \Z$. For $j \not \in I$, using the bound $|p(w)| \le \frac{1}{1-|w|}$ for each $w \in \partial \mathbb{D}$, we see $$|p(h(e^{2\pi i\frac{j}{m}}z))| \le \frac{1}{1-|h(e^{2\pi i\frac{j}{m}}z)|} \le \frac{1}{1-(1-a)} = n^{2/3},$$ thereby obtaining
\begin{equation}\label{jnotinB}
\prod_{j \not \in I \cup \{0,m-1\}} |p(h(e^{2\pi i\frac{j}{m}}z))| \le (n^{2/3})^{(J_1-1)+(m-J_2+1)} \le (n^{2/3})^{Cn^{1/3}\log^4 n} \le e^{Cn^{1/3}\log^5n}.
\end{equation}
Now, for $j \in I$, since $$|h(e^{2\pi i \frac{j}{m}}z)| \le 1-c_5\frac{\min\left(\frac{j}{m}+\delta,1-(\frac{j}{m}+\delta)\right)}{\log^2n} \le 1-c'n^{-1/3}\log^2n,$$ we have $$\left|p\hspace{-.5mm}\left(h(e^{2\pi i \frac{j}{m}z})\right)-\tilde{p}\hspace{-.5mm}\left(h(e^{2\pi i \frac{j}{m}z})\right)\right| \le ne^{-c'\log^2 n} \le e^{-c\log^2n}.$$ Therefore,
\begin{equation}\label{pbyptilde}
\prod_{j \in I} |p(h(e^{2\pi i\frac{j}{m}}z))| \le \prod_{j \in I} \left(|\tilde{p}(h(e^{2\pi i\frac{j}{m}}z))|+e^{-c\log^2 n}\right).
\end{equation}
By both parts of Lemma \ref{product}, we obtain \begin{align}\label{jinB}\prod_{j \in I} \left(|\tilde{p}(h(e^{2\pi i\frac{j}{m}}z))|+e^{-c\log^2 n}\right) &= \sum_{I' \sub I} \left(\prod_{j \in I\setminus I'} |\tilde{p}(h(e^{2\pi i\frac{j}{m}}z))|\right)e^{-c(\log^2 n)|I'|} \nonumber \\ &= \medmath{\sum_{I' \sub I} \left(\prod_{j \in I} |\tilde{p}(h(e^{2\pi i\frac{j}{m}}z))|\right)\left(\prod_{j \in I'} |\tilde{p}(h(e^{2\pi i\frac{j}{m}}z))|\right)^{-1}e^{-c(\log^2 n) |I'|}} \nonumber \\ &\le e^{Cn^{1/3}\log^5n} \sum_{I' \sub I} (2n^{2/3})^{|I'|}e^{-c(\log^2n) |I'|} \nonumber \\ &\le e^{Cn^{1/3}\log^5 n}\sum_{I' \sub I} e^{-c'(\log^2 n) |I'|} \nonumber \\ &\le e^{Cn^{1/3}\log^5 n}\sum_{k=0}^{|I|} {|I| \choose k} e^{-c'k\log^2 n} \nonumber \\ &\le 2e^{Cn^{1/3}\log^5 n}. \end{align} Combining \eqref{mmpeqn}, \eqref{jnotinB}, \eqref{pbyptilde}, and \eqref{jinB}, we've shown $$|g(z)| \le \left(\max_{z \in \widetilde{E}_a} |p(z)|\right)^2 e^{Cn^{1/3}\log^5 n}.$$ As this holds for all $z \in \partial \mathbb{D}$, we have $$\max_{z \in \partial \mathbb{D}} |g(z)| \le  \left(\max_{z \in \widetilde{E}_a} |p(z)|\right)^2 e^{Cn^{1/3}\log^5 n}.$$ To finish, note that $|g(0)| = |p(h(0))|^m = |p(0)|^m = 1$, so, as $g$ is clearly analytic, the maximum modulus principle implies $\max_{z \in \partial \mathbb{D}} |g(z)| \ge 1$.
\end{proof}

\vspace{1mm}

\section{Tightness of our methods}

In this section, we prove the following, showing that our methods cannot be pushed further in some sense. We denote $\{0,1\}^{\le p} := \cup_{j=1}^p \{0,1\}^j$. 

\begin{proposition}\label{sparseexample}
For all $n$ large, there are distinct strings $x,y \in \{0,1\}^n$ such that for all $p \le \frac{1}{10}n^{1/3}$, $i \in [p]_0$, and $w \in \{0,1\}^{\le p}$, it holds that $|\emph{\text{pos}}_w(x)_{i,p}| = |\emph{\text{pos}}_w(y)_{i,p}|$. 
\end{proposition}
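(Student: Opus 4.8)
The plan is to take $x$ and $y$ to be the indicator strings of two sparse, highly spread‑out sets of $1$'s. Write $k := \lfloor \tfrac{1}{10}n^{1/3}\rfloor$. The first step is a reduction: \textit{it suffices to produce distinct $k$-separated sets $A,B \sub [2k,\,n-2k]$ with $|A_{r,q}| = |B_{r,q}|$ for every integer $1 \le q \le k$ and every $r \in [q]_0$.} Given such $A$ and $B$, let $x$ (resp.\ $y$) be the string in $\{0,1\}^n$ whose $1$'s occur exactly at the positions in $A$ (resp.\ $B$); then $x \ne y$, and for a fixed $p \le \tfrac1{10}n^{1/3}$, $i \in [p]_0$, and $w \in \{0,1\}^{\le p}$ of length $\ell := |w| \le p \le k$, I would evaluate $\pos_w(x)_{i,p}$ by cases on the number of $1$'s in $w$. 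If $w$ has at least two $1$'s they lie within distance $\ell - 1 < k$, so an occurrence in $x$ would place two elements of $A$ at distance $< k$, which $k$-separation forbids; thus $\pos_w(x) = \pos_w(y) = \emptyset$. If $w$ has exactly one $1$, at coordinate $s$, then $k$-separation of $A$ makes the remaining coordinates of any candidate window automatically $0$ and $A \sub [2k,n-2k]$ prevents truncation by the ends, so $\pos_w(x) = \{a - (s-1) : a \in A\}$ exactly, whence $|\pos_w(x)_{i,p}| = |A_{r,p}|$ for the $r \in [p]_0$ with $r \equiv i + s - 1 \pmod{p}$, matching the value for $y$. If $w = 0^\ell$, then $\pos_w(x)$ consists of the $j \in [1,n-\ell+1]$ whose length-$\ell$ window avoids $A$, and its complement in $[1,n-\ell+1]$ is the \emph{disjoint} union over $a \in A$ of the $\ell$-element intervals $\{a-\ell+1,\dots,a\}$ (disjointness and containment in $[1,n-\ell+1]$ again from $k$-separation and $A \sub [2k,n-2k]$); since $\ell \le p$, each such interval meets the residue class $i \bmod p$ in exactly $|\{t \in \{0,\dots,\ell-1\} : a \equiv i+t \pmod{p}\}|$ points, so $|\pos_{0^\ell}(x)_{i,p}|$ depends on $A$ only through the numbers $|A_{r,p}|$, and so equals the value for $y$.

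To produce $A$ and $B$ I would use pigeonhole. Attach to a set $S$ its \emph{signature} $\big(|S_{r,q}|\big)_{1 \le q \le k,\ r \in [q]_0}$, and look only at the $k$-separated subsets of $[2k,\,n-2k]$ of one fixed size $t := \lfloor n/(4k)\rfloor$. Parametrising such a subset by its consecutive gaps shows there are at least $\binom{n/2}{t} \ge (2k)^t$ of them once $n$ is large, hence more than $2^{\,t\log_2 k}$, which is of order $2^{\,c\,n^{2/3}\log n}$ for an absolute $c>0$. On the other hand the signature of a size-$t$ set in modulus $q$ is a weak composition of $t$ into $q$ parts, so the number of attainable signatures is at most
\[
\prod_{q=1}^{k}\binom{t+q-1}{q-1}\ \le\ \binom{t+k}{k}^{\,k}\ \le\ \Big(\tfrac{e(t+k)}{k}\Big)^{k^2},
\]
whose logarithm is $O(k^2\log n)$, again of order $n^{2/3}\log n$ but with a far smaller constant, since $k^2 \approx \tfrac1{100}n^{2/3}$ while $t \approx \tfrac52 n^{2/3}$. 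Thus for $n$ large two distinct $k$-separated $t$-subsets $A,B$ of $[2k,\,n-2k]$ have the same signature; the $q=1$ coordinate gives $|A|=|B|=t\ge1$, so $A\ne B$ are nonempty, and $|A_{r,q}|=|B_{r,q}|$ for all $q\le k$ and $r$, exactly as the reduction requires.

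The one point needing genuine care is the pigeonhole inequality: a $k$-separated subset of an interval of length $\sim n$ must carry strictly more information than its list of residue counts. Both quantities are of order $n^{2/3}\log n$ --- the set contributes about $\tfrac{n}{4k}\log_2 k$ bits (which of its $\sim n/(4k)$ usable slots are filled), the signature only about $k^2\log_2(n/k^2)$ bits --- so the comparison is won on the constants: $\tfrac{n}{4k}\sim\tfrac52 n^{2/3}$ dwarfs $k^2\sim\tfrac1{100}n^{2/3}$, and the factor $\tfrac1{10}$ in the bound $p \le \tfrac1{10}n^{1/3}$ leaves ample room. Everything else is routine bookkeeping: the reduction just formalises that, for patterns short relative to the gaps of a well-separated set, the occurrences of a pattern in an indicator string form a translate of a sub-progression of the set, while the all-zeros pattern is handled by inclusion--exclusion over which windows hit the set.
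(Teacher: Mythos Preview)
Your proof is correct and follows essentially the same approach as the paper: a pigeonhole comparison of $k$-separated sets against their residue-count signatures, followed by the same case analysis on the number of $1$'s in $w$ (with the all-zeros case reduced to the single-$1$ case via the complement in $[1,n-\ell+1]$). The only cosmetic differences are that the paper isolates the pigeonhole as a separate proposition (over prime moduli only, with the cruder bound $\prod_{p\le k} n^{p}$ on the number of signatures, which nonetheless yields the slightly stronger threshold $cn^{1/3}\log^{1/2}n$), and phrases the $w=0^\ell$ step as $\pos_{0^\ell}(x)=[n-\ell+1]\setminus\bigsqcup_{s=0}^{\ell-1}\pos_{0^s10^{\ell-1-s}}(x)$ rather than as a disjoint union of intervals indexed by $A$.
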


We begin by showing Theorem \ref{diffprime} is tight, via a standard pigeonhole argument that has been used in a variety of other papers. 

\begin{proposition}\label{tight}
For all $n$ large, there are distinct $n^{1/3}$-separated subsets $A,B$ of $[n]$ such that $|A_{i,p}| = |B_{i,p}|$ for all $p \le cn^{1/3}\log^{1/2}n$ and all $i \in [p]_0$. 
\end{proposition}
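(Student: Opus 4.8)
The plan is to use a counting (pigeonhole) argument over an appropriate family of well-separated sets, comparing the number of such sets to the number of possible ``signatures'' they can produce when reduced mod small primes. First I would fix a parameter, say $d = n^{1/3}$, and consider the family $\mathcal{F}$ of all subsets $A \subseteq [n]$ that are $d$-separated; more concretely, to make the count clean, I would restrict to sets of a fixed size $s \approx n^{2/3}$ that are $d$-separated, e.g. sets of the form $\{d, 2d, \dots, sd\}$ perturbed slightly, or simply all $d$-separated subsets of $\{d, 2d, \dots, (s)d\}$ — i.e. pick a subset $S \subseteq [s]$ and set $A = \{d \cdot j : j \in S\}$. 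Any such $A$ is automatically $d$-separated and contained in $[n]$ provided $sd \le n$, so there are $2^{s} = 2^{\Omega(n^{2/3})}$ sets in this family. (One must be slightly careful that distinct $S$ give distinct $A$, which is obvious here.)

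Next I would bound the number of distinct signatures. For a prime $p$, the tuple $(|A_{i,p}|)_{i \in [p]_0}$ is a vector of $p$ non-negative integers summing to $|A| \le n$, so the number of possibilities for this single vector is at most $\binom{n+p}{p} \le (n+1)^p$ (or more crudely $n^p$). Ranging over all primes $p \le k$, the total number of possible signatures $\big((|A_{i,p}|)_{i})_{p \le k, \, p \text{ prime}}$ is at most $\prod_{p \le k} (n+1)^p \le (n+1)^{\sum_{p \le k} p} \le (n+1)^{k^2}$, using $\sum_{p \le k} p \le k^2$. If this total number of signatures is strictly less than $|\mathcal{F}| = 2^{cn^{2/3}}$, then two distinct sets $A \ne B$ in $\mathcal{F}$ must share a signature, which is exactly the conclusion: $|A_{i,p}| = |B_{i,p}|$ for all primes $p \le k$ and all $i \in [p]_0$. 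The inequality $(n+1)^{k^2} < 2^{cn^{2/3}}$ holds precisely when $k^2 \log n \lesssim n^{2/3}$, i.e. $k \lesssim n^{1/3}/\sqrt{\log n}$, which gives the claimed bound $k = c n^{1/3} \log^{1/2} n$ — wait, this direction gives $k \le c n^{1/3}(\log n)^{-1/2}$, not $(\log n)^{1/2}$; I would re-examine the exponent bookkeeping. The point is that $\sum_{p \le k} p \sim \tfrac12 k^2/\log k$ by the prime number theorem, so the exponent is really $\tfrac12 (k^2/\log k)\log n$, and setting this $< cn^{2/3}$ allows $k$ up to order $n^{1/3}\sqrt{\log n}$ (the extra $\log k \approx \tfrac13 \log n$ in the denominator is what upgrades $(\log n)^{-1/2}$ to $(\log n)^{1/2}$). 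So the key quantitative input is the sharper estimate $\sum_{p \le k} p = (1+o(1)) \tfrac12 k^2 / \log k$ rather than the crude $\le k^2$.

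The only genuine subtlety — and the step I'd expect to require the most care — is making sure the family $\mathcal{F}$ of $n^{1/3}$-separated sets is large enough while the sets genuinely live inside $[n]$: we need $\log_2 |\mathcal{F}| \ge c n^{2/3}$ with the same constant $c$ as in the statement, and we need $sd \le n$ with $s \asymp n^{2/3}$, $d = n^{1/3}$, which is exactly tight, so one should take $s = n^{2/3}$ (or a small constant fraction thereof) and track constants. I would also double-check that we may restrict to sets of a single fixed size $s$ (which makes the signature-counting cleaner, since then the vectors $(|A_{i,p}|)_i$ sum to exactly $s$, giving the slightly better bound $\binom{s+p-1}{p-1}$) — this only costs us a constant factor in $\log|\mathcal{F}|$, since $\binom{s}{s/2} = 2^{\Omega(s)}$ is still exponentially large. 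Finally, I'd note that Proposition \ref{tight} is stated with ``$p \le cn^{1/3}\log^{1/2}n$'' and no restriction to primes in the counting would even be needed, but restricting to primes only helps (fewer moduli to control), so the argument goes through verbatim. Assembling these pieces gives Proposition \ref{tight}, and I expect Proposition \ref{sparseexample} to then follow by applying this to the indicator-position sets (noting $\text{pos}_w(x)$ is automatically well-separated for suitable $w$, as exploited earlier via Robson's Lemmas 1 and 2), though that is the content of the next proposition rather than this one.
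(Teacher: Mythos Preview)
Your overall strategy (pigeonhole over well-separated sets versus the number of possible ``mod-$p$ profiles'') is exactly the paper's, but your choice of family $\mathcal{F}$ is too small, and the arithmetic you do after the ``wait'' does not actually rescue it. With $\mathcal{F} = \{\,\{dj : j \in S\} : S \subseteq [s]\,\}$ and $s = n^{2/3}$ you have $\log |\mathcal{F}| \asymp n^{2/3}$, so the pigeonhole inequality becomes
\[
\frac{k^2}{\log k}\,\log n \;<\; c\,n^{2/3}.
\]
Since $\log k \asymp \log n$, the two logarithms simply cancel and you obtain only $k^2 \lesssim n^{2/3}$, i.e.\ $k = O(n^{1/3})$, not $k = c\,n^{1/3}\log^{1/2} n$. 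The PNT refinement $\sum_{p\le k} p \sim \tfrac12 k^2/\log k$ buys you back one factor of $\log n$, exactly offsetting the $\log n$ that comes from the base $(n+1)$ in the signature count; it does \emph{not} produce an extra $\log^{1/2} n$.

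What is missing is an additional $\log n$ in the exponent of $|\mathcal{F}|$. The paper obtains it by letting each ``slot'' contribute $n^{1/3}$ choices rather than a binary choice: take $\Sigma$ to be all sets containing at most one element from each of the $\sim \tfrac13 n^{2/3}$ disjoint intervals $[1,n^{1/3}],\,[2n^{1/3},3n^{1/3}],\,[4n^{1/3},5n^{1/3}],\dots$, so that $|\Sigma| \ge (n^{1/3})^{\frac13 n^{2/3}} = e^{\frac19 n^{2/3}\log n}$. Then the comparison
\[
\frac{k^2}{\log k}\,\log n \;<\; \tfrac19\,n^{2/3}\log n
\]
gives $k^2/\log k \lesssim n^{2/3}$, hence $k \le c\,n^{1/3}\log^{1/2} n$ as claimed. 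Ironically, your throwaway phrase ``perturbed slightly'' was pointing at the right construction; the subset-of-an-AP family you actually analyzed throws away precisely the entropy needed for the stated bound.
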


\begin{proof}
Let $\Sigma$ denote the collection of subsets $A \sub [n]$ that have at most one number from each of the intervals $[1,n^{1/3}],[2n^{1/3},3n^{1/3}],[4n^{1/3},5n^{1/3}],\dots$. Note $|\Sigma| \ge (n^{1/3})^{\frac{1}{3}n^{2/3}} = e^{\frac{1}{9}n^{2/3}\log n}$. On the other hand, for any $A \sub [n]$, the number of possible tuples $(|A_{i,p}|)_{\substack{p \le k \\ i \in [p]_0}}$ is at most $\prod_{p \le k} n^p \le e^{\frac{k^2}{\log k}\log n}$. Taking $k = cn^{1/3}\log^{1/2}n$ yields $\frac{k^2}{\log k}\log n < \frac{1}{9}n^{2/3}\log n$, meaning there are distinct $A,B \in \Sigma$ with the same tuple, i.e. $|A_{i,p}| = |B_{i,p}|$ for all $p \le k$ and $i \in [p]_0$. As $A,B$ are $n^{1/3}$-separated, the proof is complete. 
\end{proof}

\vspace{1mm}

\begin{proof}[Proof of Proposition \ref{sparseexample}]
For a large $n$, let $A,B \sub [n/2]$ be the sets guaranteed by Proposition \ref{tight}. Let $x = (1_A(j-\frac{n}{4}))_{j=1}^n, y = (1_B(j-\frac{n}{4}))_{j=1}^n \in \{0,1\}^n$ be the strings with $1$s at indices in $A$ and $B$ then padded at the beginning and end by $0$s. Fix $p \le \frac{1}{10}n^{1/3}$ and $i \in [p]_0$. Since $A,B$ are $\frac{1}{10}n^{1/3}$-separated, we have $|\pos_w(x)_{i,p}| = |\pos_w(y)_{i,p}| = 0$ for all $w \in \{0,1\}^{\le p}$ with at least two $1$s. Since $$\pos_{0^l}(x) = [n-l+1]\setminus\sqcup_{s=0}^{l-1} \pos_{0^s10^{l-1-s}}(x),$$ it suffices to show $|\pos_w(x)_{i,p}| = |\pos_w(y)_{i,p}|$ for all $w \in \{0,1\}^{\le p}$ with exactly one $1$. Fix such a $w$; say $w = 0^s10^{l-1-s}$ for some $l \le p$ and $s \in \{0,\dots,l-1\}$. Then, due to the padding preventing boundary issues, $\pos_w(x) = \{j : x_{j+s} = 1\} = \{j : 1_A(j+s-\frac{n}{4}) = 1\} = A-s+\frac{n}{4}$ and thus $|\pos_w(x)_{i,p}| = |A_{i+s-\frac{n}{4},p}|$. Similarly, $|\pos_w(y)_{i,p}| = |B_{i+s-\frac{n}{4},p}|$. Since $p \le c(n/2)^{1/3}\log^{1/2}(n/2)$, the proof is complete. 
\end{proof}

\vspace{1mm}

\section{Acknowledgments} 

I would like to thank my advisor Ben Green for several helpful comments on the readability of the paper and Noah Golowich for pointing out a flaw in a claimed generalization of one of the propositions in a previous version of the paper.

\end{document}